\newtheorem {theorem} {Theorem}
\newtheorem {lemma} {Lemma}
\newtheorem {cor} {cor}
\newcommand {\ttbox} [1] {\mbox {\ttfamily*1}} 
\title{Optimal recovery of operator sequences}
\author{V.\,F.~Babenko, N.\,V.~Parfinovych, D.\,S.~Skorokhodov}
\date{\today}
\begin{document}

\maketitle

\begin{abstract}
    In this paper we consider two recovery problems based on information given with an error. First is the problem of optimal recovery of the class $W^T_q = \{(t_1h_1,t_2h_2,\ldots)\in \ell_q\,:\,\|h\|_q\leqslant 1\}$, where $1\le q < \infty$ and $t_1\geqslant t_2\geqslant \ldots \geqslant 0$, in the space $\ell_q$ when in the capacity of inexact information we know either the first $n\in\mathbb{N}$ elements of a sequence with an error measured in the space of finite sequences $\ell_r^n$, $0 < r \le \infty$, or a sequence itself is known with an error measured in the space $\ell_r$. The second is the problem of optimal recovery of scalar products acting on Cartesian product $W^{T,S}_{p,q}$ of classes $W^T_p$ and $W^S_q$, where $1 < p,q < \infty$, $\frac{1}{p} + \frac{1}{q} = 1$ and $s_1\ge s_2\ge \ldots \ge 0$, when in the capacity of inexact information we know the first $n$ coordinate-wise products $x_1y_1, x_2y_2,\ldots,x_ny_m$ of the element $x\times y \in W^{T,S}_{p,q}$ with an error measured in the space $\ell_r^n$. We find exact solutions to above problems and construct optimal methods of recovery. As an application of our results we consider the problem of optimal recovery of classes in Hilbert spaces by Fourier coefficients known with an error measured in the space $\ell_p$ with $p > 2$.
\end{abstract}

\section{Introduction}
\label{Intro}

Let $X,Z$ be complex linear spaces, $Y$ be a complex normed space, $A:X\to Y$ be an operator, in general non-linear, with domain $\mathcal{D}(A)$, $W\subset \mathcal{D}(A)$ be some class of elements. Denote by $\mathfrak{B}(Z)$ the set of non-empty subsets of $Z$, and let $I: \overline{{\rm span}\,W}\to \mathfrak{B}(Z)$ be a given mapping called {\it information}. When saying that information about element $x\in W$ is available we mean that some element $z\in I(x)$ is known. An arbitrary mapping $\Phi: Z\to Y$ is called {\it method of recovery} of operator $A$. Define {\it the error of method of recovery} $\Phi$ of operator $A$ on the set $W$ given information $I$:
\begin{equation}
\label{recovery_error}
    \mathcal{E}(A,W,I,\Phi) = \sup_{x\in W} \sup_{z\in I(x)}\left\|Ax-\Phi(z)\right\|_Y.
\end{equation}
The quantity
\begin{equation}
\label{optimal_recovery}
    \mathcal{E}(A,W,I) = \inf_{\Phi:Z\to Y} \mathcal{E}(A,W,I,\Phi)
\end{equation}
is called {\it the error of optimal recovery} of operator $A$ on elements of class $W$ given information $I$. Method $\Phi^*$ delivering $\inf$ in~\eqref{optimal_recovery} (if any exists) is called {\it optimal}.

The problem of recovery of linear operators in Hilbert spaces based on exact information was studied in~\cite{MicRiv_77}. In case information mapping $I$ has the form $Ix = i(x) + B$, where $i$ is a linear operator and $B$ is a ball of some radius defining information error, recovery problem~\eqref{optimal_recovery} was considered in~\cite{MelMic_79} (see also~\cite{MicRiv_84}--\cite{Pla_96}). Alternative approach to the study of optimal recovery problems based on standard principles of convex optimization was proposed in~\cite{MagOsi_02}. In~\cite{MelMic_79} it was shown that among optimal methods of recovery there exists a linear one, and in~\cite{MagOsi_02} explicit representations for optimal methods of recovery were found in cases when the error of information is measured with respect to the uniform metric. For a thorough overview of optimal recovery and related problems we refer the reader to books~\cite{TraWozWas_88,Pla_96} and survey~\cite{Are_96}.

Remark that results of the present work supplement and generalize results of paper~\cite{MagOsi_02} on optimal recovery of functions and its derivatives and paper~\cite{BabGunPar_20}.

\section{Elementary lower estimate}

Let us present a trivial yet effective lower estimate for the error of optimal recovery~\eqref{optimal_recovery}. Denote by $\theta_Z$ the null element of space $Z$ and let $I$ be some information mapping.

\begin{lemma}
\label{lem_lower_estimate}
    {\sl Let $\theta_Z\in I(W)$. Then
    \[
        \mathcal{E}(A,W,I)\ge \frac 12 \sup\limits_{x,y\in W:\atop \theta_Z\in Ix\cap Iy} \|Ax - Ay\|_Y.
    \]}
\end{lemma}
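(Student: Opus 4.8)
The plan is to exploit the hypothesis $\theta_Z \in I(W)$ by choosing, for any admissible pair $x, y \in W$ with $\theta_Z \in Ix \cap Iy$, the same piece of information $z = \theta_Z$ for both elements, and then apply the triangle inequality to any candidate method $\Phi$. Concretely, fix an arbitrary method of recovery $\Phi : Z \to Y$. For such a pair $x, y$, since $\theta_Z \in Ix$ and $\theta_Z \in Iy$, the definition~\eqref{recovery_error} of the error gives $\mathcal{E}(A, W, I, \Phi) \ge \|Ax - \Phi(\theta_Z)\|_Y$ and likewise $\mathcal{E}(A, W, I, \Phi) \ge \|Ay - \Phi(\theta_Z)\|_Y$, because both suprema in~\eqref{recovery_error} are taken over all $x \in W$ and all $z \in I(x)$.

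Next I would add these two inequalities and use the triangle inequality in $Y$:
\[
    2\,\mathcal{E}(A, W, I, \Phi) \ge \|Ax - \Phi(\theta_Z)\|_Y + \|Ay - \Phi(\theta_Z)\|_Y \ge \|Ax - Ay\|_Y.
\]
Since this holds for every pair $x, y \in W$ with $\theta_Z \in Ix \cap Iy$, taking the supremum over all such pairs yields $2\,\mathcal{E}(A, W, I, \Phi) \ge \sup \|Ax - Ay\|_Y$. Finally, since $\Phi$ was arbitrary, taking the infimum over all methods $\Phi$ on the left-hand side and invoking the definition~\eqref{optimal_recovery} gives the claimed bound
\[
    \mathcal{E}(A, W, I) \ge \frac{1}{2} \sup_{x, y \in W:\ \theta_Z \in Ix \cap Iy} \|Ax - Ay\|_Y.
\]

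There is essentially no obstacle here: the argument is the standard "hardest pair" lower bound, and the only thing to be careful about is that $\Phi(\theta_Z)$ is a single fixed element of $Y$ that serves as the common approximant for both $Ax$ and $Ay$, which is exactly what makes the triangle inequality applicable. One should also note that the hypothesis $\theta_Z \in I(W)$ guarantees the supremum is over a nonempty index set (it contains at least the diagonal pairs $x = y$), so the right-hand side is well-defined, and the bound is vacuous but still valid if no genuinely distinct pair exists.
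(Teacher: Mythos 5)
Your proof is correct and follows essentially the same route as the paper: both arguments fix the common datum $\theta_Z$, compare $Ax$ and $Ay$ to the single element $\Phi(\theta_Z)$ via the triangle inequality, and then pass to the supremum over admissible pairs and the infimum over methods $\Phi$. The only cosmetic difference is that the paper takes the suprema before averaging the two bounds, while you fix a pair first; the content is identical.
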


\begin{proof}[Proof]
    Indeed, for every method of recovery $\Phi:Z\to Y$,
    \[
        \begin{array}{rcl}
            \mathcal{E}(A,W,I,\Phi) & \ge & \sup\limits_{x\in W:\atop \theta_Z\in Ix}\left\|Ax-\Phi\left(\theta_Z\right)\right\|_Y \\ 
            &\ge & \displaystyle \frac{1}{2}\left(\sup\limits_{x\in W:\atop \theta_Z\in Ix}\left\|Ax-\Phi\left(\theta_Z\right)\right\|_Y + \sup\limits_{y\in W:\atop \theta_Z\in Iy}\left\|Ay-\Phi\left(\theta_Z\right)\right\|_Y\right) \\
            & \ge & \displaystyle \frac 12\sup\limits_{x,y\in W:\atop \theta_Z\in Ix\cap Iy} \|Ax - Ay\|_Y.
        \end{array}
    \]
    Taking $\inf$ over methods $\Phi$ we finish the proof. \qedhere
\end{proof}

From Lemma~\ref{lem_lower_estimate} we easily derive the following consequences.

\begin{cor}
\label{cor_identity_operator}
    {\sl Let $A$ be an odd operator, $\tilde{x}\in W$ be such that $-\tilde{x}\in W$ and $\theta_Z\in I (\tilde{x})\cap I (-\tilde{x}).$ Then 
    \[
        \mathcal{E}(A,W,I)\ge\|A\tilde{x}\|_X.
    \]}
\end{cor}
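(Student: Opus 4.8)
The plan is to simply specialize Lemma~\ref{lem_lower_estimate} to one cleverly chosen pair of elements. Take $x = \tilde{x}$ and $y = -\tilde{x}$. By hypothesis both belong to $W$, and $\theta_Z \in I(\tilde{x}) \cap I(-\tilde{x})$, so this pair is admissible in the supremum on the right-hand side of the lemma. Consequently,
\[
    \mathcal{E}(A,W,I) \ge \frac{1}{2}\,\bigl\|A\tilde{x} - A(-\tilde{x})\bigr\|_Y .
\]

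Next I would invoke the oddness of $A$. Since $-\tilde{x}\in W\subset\mathcal{D}(A)$, the value $A(-\tilde{x})$ is defined and equals $-A\tilde{x}$. Hence $A\tilde{x} - A(-\tilde{x}) = 2A\tilde{x}$, and the inequality above becomes
\[
    \mathcal{E}(A,W,I) \ge \frac{1}{2}\cdot 2\,\|A\tilde{x}\|_Y = \|A\tilde{x}\|_Y ,
\]
which is exactly the assertion.

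There is no real obstacle here: the statement is a one-line consequence of Lemma~\ref{lem_lower_estimate}, and the only thing to check is that the pair $(\tilde{x},-\tilde{x})$ meets the constraint $\theta_Z\in Ix\cap Iy$ of that lemma — which is precisely what the hypothesis $\theta_Z\in I(\tilde{x})\cap I(-\tilde{x})$ guarantees. (Note that, consistently with $A\colon X\to Y$, the norm in the conclusion should be read as $\|A\tilde{x}\|_Y$.)
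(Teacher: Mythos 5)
Your proof is correct and is exactly the intended derivation from Lemma~\ref{lem_lower_estimate} (the paper gives no separate argument, stating the corollary as an easy consequence of that lemma): take $x=\tilde{x}$, $y=-\tilde{x}$ and use oddness of $A$ to get $\|A\tilde{x}-A(-\tilde{x})\|_Y=2\|A\tilde{x}\|_Y$. Your side remark is also right that the conclusion's norm should be read as $\|A\tilde{x}\|_Y$, the subscript $X$ in the statement being a typo.
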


\begin{cor}
\label{cor_scalar_product}
    {\sl Let $Y = \mathbb{C}$, $R$ be a (complex) normed space, $X = R \times R^*$, $W_1\subset R$ and $W_2\subset R^*$ be given classes. Also, let $A$ be the scalar product of elements in $R\times R^*$, i.e. $A(x,y) = \left<y, x\right>$, $x\in R$ and $y\in R^*$. Assume that there exist $\tilde{x}_1\in W_1$ and $\tilde{x}_2\in W_2$ such that either
    \[
        -\tilde{x}_1\in W_1\quad\text{and}\quad \theta_Z\in I\left(\tilde{x}_1,\tilde{x}_2\right)\cap I\left(-\tilde{x}_1,\tilde{x}_2\right)
    \]
    or
    \[
        -\tilde{x}_2\in W_2\quad\text{and}\quad \theta_Z\in I\left(\tilde{x}_1,\tilde{x}_2\right)\cap I\left(\tilde{x}_1,-\tilde{x}_2\right).
    \]
    Then 
    \[
		\mathcal{E}\left(A,W_1\times W_2, I\right)\ge\left|\left< \tilde{x}_2,\tilde{x}_1\right>\right|.
    \]}
\end{cor}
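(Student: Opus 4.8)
The plan is to deduce Corollary~\ref{cor_scalar_product} directly from Corollary~\ref{cor_identity_operator} by viewing the scalar product as an odd operator in one of its two arguments. Concretely, suppose the first alternative holds: $-\tilde{x}_1\in W_1$ and $\theta_Z\in I(\tilde{x}_1,\tilde{x}_2)\cap I(-\tilde{x}_1,\tilde{x}_2)$. The idea is to freeze the second coordinate at $\tilde{x}_2$ and consider the recovery problem on the ``slice'' $W_1\times\{\tilde{x}_2\}$, which is a subset of $W_1\times W_2$; since the error of optimal recovery is monotone with respect to the class $W$ (a smaller class gives a smaller supremum in~\eqref{recovery_error}, hence a smaller infimum), we have $\mathcal{E}(A,W_1\times W_2,I)\ge \mathcal{E}(A,W_1\times\{\tilde{x}_2\},I)$.

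Next I would apply Lemma~\ref{lem_lower_estimate} with the two admissible elements $(\tilde{x}_1,\tilde{x}_2)$ and $(-\tilde{x}_1,\tilde{x}_2)$, both of which lie in the slice and both of which have $\theta_Z$ in their information set by hypothesis. The lemma then yields
\[
    \mathcal{E}(A,W_1\times W_2,I)\ge \frac12\left\|A(\tilde{x}_1,\tilde{x}_2)-A(-\tilde{x}_1,\tilde{x}_2)\right\|_Y = \frac12\left|\left<\tilde{x}_2,\tilde{x}_1\right> - \left<\tilde{x}_2,-\tilde{x}_1\right>\right| = \left|\left<\tilde{x}_2,\tilde{x}_1\right>\right|,
\]
using bilinearity (additivity and homogeneity of $\left<\cdot,\cdot\right>$ in the first argument) to get $\left<\tilde{x}_2,-\tilde{x}_1\right> = -\left<\tilde{x}_2,\tilde{x}_1\right>$, and $Y=\mathbb{C}$ so $\|\cdot\|_Y = |\cdot|$. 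Alternatively, one can phrase this as applying Corollary~\ref{cor_identity_operator} to the odd operator $x_1\mapsto A(x_1,\tilde{x}_2) = \left<\tilde{x}_2,x_1\right>$ on the class $W_1$ with the element $\tilde{x}_1$, taking care that the ambient information mapping restricted to the slice still sends $\pm\tilde{x}_1$ to sets containing $\theta_Z$. The second alternative ($-\tilde{x}_2\in W_2$ and the symmetric condition on the information) is handled identically, freezing the first coordinate at $\tilde{x}_1$ and using oddness and linearity of $\left<\cdot,\cdot\right>$ in the second argument.

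The argument is essentially a bookkeeping exercise, so there is no genuine obstacle; the only point requiring a modicum of care is the interplay between restricting the class and restricting the information mapping. One must note that in the definition~\eqref{recovery_error} the supremum over $x$ is taken over the given class, and passing to the subclass $W_1\times\{\tilde{x}_2\}$ only shrinks that supremum, while the information mapping $I$ is used unchanged; since the hypotheses guarantee $\theta_Z\in I(\pm\tilde{x}_1,\tilde{x}_2)$ for these specific points, the hypotheses of Lemma~\ref{lem_lower_estimate} (namely $\theta_Z\in I(W)$ and the existence of two class elements whose information sets both contain $\theta_Z$) are met without any additional assumption. Hence the chain of inequalities goes through, completing the proof.
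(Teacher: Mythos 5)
Your proof is correct and follows the paper's intended route: the paper states this corollary as an immediate consequence of Lemma~\ref{lem_lower_estimate}, obtained exactly as you do by taking the pair $(\tilde{x}_1,\tilde{x}_2)$ and $(-\tilde{x}_1,\tilde{x}_2)$ (resp.\ $(\tilde{x}_1,-\tilde{x}_2)$), whose information sets both contain $\theta_Z$, and using linearity of the pairing to get $\frac12\left|\left<\tilde{x}_2,\tilde{x}_1\right>-\left<\tilde{x}_2,-\tilde{x}_1\right>\right|=\left|\left<\tilde{x}_2,\tilde{x}_1\right>\right|$. The detour through the slice $W_1\times\{\tilde{x}_2\}$ and monotonicity in the class is harmless but unnecessary, since $(-\tilde{x}_1,\tilde{x}_2)$ already lies in $W_1\times W_2$ by hypothesis.
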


Remark that similar and related lower estimates were established in many papers (see, {\it e.g.},~\cite{MagOsi_02,BabBabParSko_16}).

\section{Optimal recovery of sequences}
\label{sec_3}
    
    Let us present notations used in the rest of the paper. Let $1\le p,q\le \infty$, $\ell_q$ be the standard space of sequences $x=\{ x_k\}_{k=1}^\infty$, complex-valued in general, with corresponding norm $\|x\|_q$, and $\ell_q^n$, $n\in\mathbb{N}$, be the spaces of finite sequences. Denote by $\theta$ the null element of $\ell_q$ and by $\theta^n$ the null element of $\ell_q^n$. 
    
    For a given non-increasing sequence $t=\{ t_k\}_{k=1}^\infty$ of non-negative numbers, consider bounded operator $T:\ell_q\to \ell_q$ defined as follows
    \[
        Th:=\{t_kh_k\}_{k=1}^\infty,\qquad h\in \ell_q,
    \]
    and the class
    \[
        W^T_{q}:=\{ x=Th\; : \; h\in \ell_q,\; \| h\|_q\le 1\}.
    \]
    
    In this section we will study the problem of optimal recovery of identity operator $A = \textrm{id}_X$ on the class $W^T_q$, also called the problem of optimal recovery of class $W^T_{q}$, when information mapping $I$ is given in one of the following forms:
    \begin{enumerate}
        \item $Ix = I_{\overline{\varepsilon}}^nx = \left(x_1,\ldots,x_n\right) + B\left[\varepsilon_1\right]\times B\left[\varepsilon_n\right]$, where $n\in\mathbb{N}$, $\varepsilon_1,\ldots,\varepsilon_n \ge 0$ and $B[\varepsilon_j] = \left[-\varepsilon_j,\varepsilon_j\right]$;
        \item $Ix = I_{\varepsilon,p}^nx = \left(x_1,\ldots,x_n\right) + B\left[\varepsilon,\ell_p^n\right]$, where $n\in\mathbb{N}$, $\varepsilon \ge 0$ and $B\left[\varepsilon,\ell_p^n\right]$ is the ball of radius $\varepsilon$ in the space $\ell_p^n$ centered at $\theta^n$;
        \item $Ix = I_{\varepsilon,p}x = x + B\left[\varepsilon,\ell_p\right]$, where $\varepsilon \ge 0$ and $B\left[\varepsilon,\ell_p\right]$ is the ball of radius $\varepsilon$ in the space $\ell_p$ centered at $\theta$.
    \end{enumerate}
    
    To simplify further notations, we set
    \[
        \mathcal{E}(W,I) := \mathcal{E}(\textrm{id}_X, W,I),\qquad \mathcal{E}(W,I,\Phi) := \mathcal{E}(\textrm{id}_X, W,I,\Phi),
    \]
    and, for $m\in\mathbb{N}$ and $q < \infty$, introduce the method of recovery $\Phi_m^*:\ell_p\to \ell_q$:
    \[
        \Phi_m^*(a)=\left\{a_1\bigg(1-\frac{t^q_{m+1}}{t^q_{1}}\bigg),\dots,a_m\bigg(1-\frac{t^q_{m+1}}{t^q_{m}}\bigg),0,\ldots\right\},\qquad a\in \ell_p,
    \]
    that would be optimal in many situations. Also, we set $\Phi_0^*(a) := \theta$, $a\in\ell_p$.
    
    
    
    In what follows we define $\sum_{k=1}^0 a_k := 0$ for numeric $a_k$'s. 
    In addition, for simplicity we assume that $t_k > 0$ for every $k\in\mathbb{N}$. Results in this paper remain true in the case when $t_k$ can attain zero value with the substitution of ${1}/{t_k}$ with $+\infty$ and ${t_{s}}/{t_k}$, $s\ge k$ with $1$.
    
\subsection{Information mapping $I_{\bar{\varepsilon}}^n(x) = (x_1,\ldots,x_n) + B[\varepsilon_1]\times\ldots\times B[\varepsilon_n]$}

\begin{theorem}
\label{thm_1}
    {\sl Let $n\in \mathbb{N}$, $1\le q < \infty$ and $\varepsilon_1,\ldots,\varepsilon_n\ge 0$. If 
    \[
        1-\sum\limits_{k=1}^{n}\frac{\varepsilon_k^q}{t_k^q}\ge 0,
    \]
    we set $m=n$. Otherwise we choose $m\in \mathbb{Z}_+$, $m\le n$, to be such that
    \[
        1-\sum\limits_{k=1}^m\frac{\varepsilon_k^q}{t_k^q}\ge 0 \qquad \text{and}\qquad 1-\sum\limits_{k=1}^{m+1}\frac{\varepsilon_k^q}{t_k^q}<0.
    \]
    Then 
    \[
        \mathcal{E}\left(W^T_{q},I^n_{\bar \varepsilon}\right) = \mathcal{E}\left(W^T_{q},I^n_{\bar\varepsilon},\Phi_m^*\right) = \left(t_{m+1}^q+\sum\limits_{k=1}^m \left(1- \frac{t_{m+1}^q}{t_k^q}\right)\varepsilon_k^q\right)^{1/q}.
    \]
    }
\end{theorem}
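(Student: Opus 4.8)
\medskip

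The plan is to establish matching lower and upper bounds. For the lower bound I would apply Lemma~\ref{lem_lower_estimate}: since $\theta^n\in I^n_{\bar\varepsilon}(x)$ whenever $|x_k|\le\varepsilon_k$ for $k\le n$, it suffices to exhibit, for every $\delta>0$, an element $x\in W^T_q$ with $|x_k|\le\varepsilon_k$ ($k\le n$) such that $\|x\|_q^q$ is close to the claimed value. Take $x_k = \varepsilon_k$ for $k\le m$, put whatever remaining "$\ell_q$-budget" is left into the single coordinate $m+1$, and set the rest to zero; concretely $x_{m+1}^q = t_{m+1}^q\bigl(1-\sum_{k=1}^m \varepsilon_k^q/t_k^q\bigr)$ and $x_k=0$ for $k>m+1$. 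One checks $x=Th$ with $\|h\|_q\le 1$ using the definition of $m$, and that $\pm x$ both lie in $W^T_q$ with $\theta^n$ in both information sets, so Lemma~\ref{lem_lower_estimate} (or Corollary~\ref{cor_identity_operator}) gives $\mathcal{E}\ge\|x\|_q = \bigl(t_{m+1}^q + \sum_{k=1}^m (1-t_{m+1}^q/t_k^q)\varepsilon_k^q\bigr)^{1/q}$; note $\|x\|_q^q = \sum_{k\le m}\varepsilon_k^q + t_{m+1}^q - t_{m+1}^q\sum_{k\le m}\varepsilon_k^q/t_k^q$, which is exactly the stated quantity. When $m=n$ the coordinate $m+1$ is unconstrained and the same construction works.

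\medskip

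For the upper bound I would estimate $\mathcal{E}(W^T_q, I^n_{\bar\varepsilon}, \Phi_m^*)$ directly. Fix $x=Th\in W^T_q$, $\|h\|_q\le 1$, and $a\in I^n_{\bar\varepsilon}(x)$, so $a_k = x_k + \xi_k$ with $|\xi_k|\le\varepsilon_k$ for $k\le n$. From the definition of $\Phi_m^*$,
\[
    \|x - \Phi_m^*(a)\|_q^q = \sum_{k=1}^m \Bigl| x_k - a_k\bigl(1 - \tfrac{t_{m+1}^q}{t_k^q}\bigr)\Bigr|^q + \sum_{k=m+1}^\infty |x_k|^q .
\]
In the first sum write $x_k - a_k(1-t_{m+1}^q/t_k^q) = \tfrac{t_{m+1}^q}{t_k^q}x_k - (1-t_{m+1}^q/t_k^q)\xi_k$, a convex combination (coefficients $t_{m+1}^q/t_k^q$ and $1-t_{m+1}^q/t_k^q$, both in $[0,1]$ since $t$ is non-increasing) of $x_k$ and $-\xi_k$; by convexity of $|\cdot|^q$,
\[
    \Bigl|x_k - a_k\bigl(1-\tfrac{t_{m+1}^q}{t_k^q}\bigr)\Bigr|^q \le \tfrac{t_{m+1}^q}{t_k^q}|x_k|^q + \bigl(1-\tfrac{t_{m+1}^q}{t_k^q}\bigr)\varepsilon_k^q .
\]
Summing, and using $|x_k|^q \le t_k^q |h_k|^q$ hence $\sum_{k>m}|x_k|^q \le t_{m+1}^q\sum_{k>m}|h_k|^q$ (monotonicity of $t$),
\[
    \|x-\Phi_m^*(a)\|_q^q \le \sum_{k=1}^m t_{m+1}^q |h_k|^q + \sum_{k=1}^m\bigl(1-\tfrac{t_{m+1}^q}{t_k^q}\bigr)\varepsilon_k^q + t_{m+1}^q\sum_{k>m}|h_k|^q \le t_{m+1}^q + \sum_{k=1}^m\bigl(1-\tfrac{t_{m+1}^q}{t_k^q}\bigr)\varepsilon_k^q ,
\]
since $\|h\|_q\le 1$. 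This matches the lower bound, so $\Phi_m^*$ is optimal and the common value is as claimed.

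\medskip

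The step I expect to need the most care is the convexity estimate: one must verify that $t_{m+1}^q/t_k^q \in [0,1]$ for all $k\le m$ — which is exactly where the monotonicity hypothesis $t_1\ge t_2\ge\cdots$ enters — and handle the edge cases $m=0$ (where $\Phi_0^*(a)=\theta$ and the bound reduces to $\|x\|_q^q\le t_1^q\le \varepsilon_1^q/ \ldots$, i.e. directly $\|x\|_q\le t_1$, consistent since $1-\varepsilon_1^q/t_1^q<0$) and $m=n$ (no error terms beyond index $n$, and the "$k=m+1$" tail is genuinely present). I would also remark, as the excerpt already notes, that if some $t_k=0$ the ratios $t_{m+1}^q/t_k^q$ are read as $1$, which keeps every inequality above valid.
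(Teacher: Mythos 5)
Your proposal is correct and follows essentially the same route as the paper: the upper bound uses the identical convex-combination decomposition of $x_k - a_k\bigl(1-\tfrac{t_{m+1}^q}{t_k^q}\bigr)$ together with convexity of $|\cdot|^q$ and monotonicity of $t$, and the lower bound uses the same extremal element (coordinates $\varepsilon_k$ for $k\le m$, the remaining $\ell_q$-budget in coordinate $m+1$) fed into Corollary~\ref{cor_identity_operator}. The only difference is notational (you write $a_k=x_k+\xi_k$ instead of working with $x_k-a_k$ directly), so no further changes are needed.
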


\begin{proof}[Proof]
    Using convexity inequality, relations $\left|x_k-a_k\right|\le \varepsilon_k$, $k=1,\ldots,n$, and monotony of the sequence $t$, we obtain that, for $x = Th\in W^T_{q}$ and $a\in I_{\bar\varepsilon}^n(x)$,
    \begin{gather*}
        \left\|x-\Phi^\ast_m(a)\right\|_q^q=\sum_{k=1}^m\left|x_k-a_k\left(1-\frac{t_{m+1}^q}{t_k^q}\right)\right|^q+\sum_{k=m+1}^\infty |x_k|^q \\
        =\sum_{k=1}^m\left|\bigg(1-\frac{t_{m+1}^q}{t_k^q}\bigg)(x_k-a_k)+\frac{t_{m+1}^q}{t_k^q}x_k\right|^q+\sum_{k=m+1}^\infty t_k^q|h_k|^q \\
        \le\sum_{k=1}^m\left(\bigg(1-\frac{t_{m+1}^q}{t_k^q}\bigg)|x_k-a_k|^q+\frac{t_{m+1}^q}{t_k^q}|x_k|^q\right)+t_{m+1}^q\sum_{k=m+1}^\infty |h_k|^q \\
        =\sum_{k=1}^m \bigg( 1-\frac{t_{m+1}^q}{t_k^q}\bigg)|x_k-a_k|^q+\sum_{k=1}^m t_{m+1}^q|h_k|^q+t_{m+1}^q\sum_{k=m+1}^\infty |h_k|^q \\
        \le\sum_{k=1}^m \bigg( 1-\frac{t_{m+1}^q}{t_k^q}\bigg)\varepsilon_k^q+t_{m+1}^q.
    \end{gather*}

    To obtain the lower estimate, we choose
    \[
        u_k := \frac{\varepsilon_k}{t_k},\quad k=1,\ldots,m,\qquad\text{and}\qquad
        u_{m+1} := \left(1-\sum\limits_{k=1}^m\frac{\varepsilon_{k}^q}{t_k^q}\right)^{1/q},
    \]
    and consider $h^* = \left(u_1,\ldots,u_{m+1},\ldots\right)\in l_q$. It is clear that $Th^*\in W^T_q$, as $\left\|h^\ast\right\|_q \le 1$. Furthermore, by the choice of number $m$ we have that $\theta\in I_{\bar\varepsilon}^n\left(Th^\ast\right)$. Hence, by Corollary~\ref{cor_identity_operator}, 
    \begin{gather*}
        \left(\mathcal{E}\left(W^T_q,I^n_{\bar{\varepsilon}}\right)\right)^q \ge \left\|Th^\ast\right\|_q^q = \sum\limits_{k=1}^m t_k^qu_k^q+t_{m+1}^qu_{m+1}^q \\ = \sum\limits_{k=1}^m\varepsilon_k^q+t_{m+1}^q\left(1-\sum\limits_{k=1}^m\frac{\varepsilon_{k}^q}{t_k^q}\right)
        =t_{m+1}^q+\sum\limits_{k=1}^m\varepsilon_k^q\left(1-\frac{t_{m+1}^q}{t_k^q}\right),
    \end{gather*}
    which finishes the proof. \qedhere
\end{proof}

\subsection{Information mapping $I^n_{\varepsilon,p}(x)=\left(x_1,\ldots, x_n\right) + B\left[\varepsilon, \ell^n_p\right]$}

We consider three cases separately: $p=\infty$, $p\le q$ and $p>q$.

\subsubsection{Case $p=\infty$}

Setting $\varepsilon_1 = \ldots = \varepsilon_n = \varepsilon$, we obtain from Theorem~\ref{thm_1} the following corollary.

\begin{theorem}
\label{thm_2}
    {\sl Let $n\in\mathbb{N}$, $1\le q < \infty$ and $\varepsilon \ge 0$. If 
    \[
        1 - \varepsilon^q \sum\limits_{k=1}^n \frac{1}{t_k^q} \ge 0
    \]
    then we set $m = n$. Otherwise we choose $m\in\mathcal{Z}_+$, $m\le n$, to be such that
    \[
        1-\varepsilon^q\sum\limits_{k=1}^m\frac{1}{t_k^q}\ge 0 \qquad \text{and}\qquad 1-\varepsilon^q\sum\limits_{k=1}^{m+1}\frac{1}{t_k^q}<0.
    \]
    Then 
    \[
        \mathcal{E}\left(W^T_{q},I^n_{\varepsilon,\infty}\right) = \mathcal{E}\left(W^T_{q},I^n_{\varepsilon,\infty},\Phi_m^*\right) = \left(t_{m+1}^q+\varepsilon^q\sum\limits_{k=1}^m \left(1-\frac{t_{m+1}^q}{t_k^q}\right)\right)^{1/q}.
    \]}
\end{theorem}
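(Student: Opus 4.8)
The plan is to derive Theorem~\ref{thm_2} as a direct specialization of Theorem~\ref{thm_1}, since the information mapping $I^n_{\varepsilon,\infty}$ coincides exactly with $I^n_{\bar\varepsilon}$ when all the individual error bounds are equal. First I would observe that for $p=\infty$ the ball $B[\varepsilon,\ell^n_\infty]$ in the $\ell^n_\infty$-norm is precisely the box $B[\varepsilon]\times\ldots\times B[\varepsilon]$ ($n$ factors), because $\|a\|_{\ell^n_\infty}\le\varepsilon$ is equivalent to $|a_k|\le\varepsilon$ for every $k=1,\ldots,n$. Hence $I^n_{\varepsilon,\infty}(x) = I^n_{\bar\varepsilon}(x)$ with the choice $\varepsilon_1=\ldots=\varepsilon_n=\varepsilon$, and all the quantities in Theorem~\ref{thm_1} are available verbatim.

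Next I would substitute $\varepsilon_k=\varepsilon$ into the hypotheses and conclusion of Theorem~\ref{thm_1}. The condition $1-\sum_{k=1}^n \varepsilon_k^q/t_k^q\ge 0$ becomes $1-\varepsilon^q\sum_{k=1}^n 1/t_k^q\ge 0$, and the defining inequalities for $m$ become $1-\varepsilon^q\sum_{k=1}^m 1/t_k^q\ge 0$ and $1-\varepsilon^q\sum_{k=1}^{m+1}1/t_k^q<0$, which is exactly the selection of $m$ stated in Theorem~\ref{thm_2}. Likewise the value
\[
    \left(t_{m+1}^q+\sum_{k=1}^m\left(1-\frac{t_{m+1}^q}{t_k^q}\right)\varepsilon_k^q\right)^{1/q}
\]
collapses to
\[
    \left(t_{m+1}^q+\varepsilon^q\sum_{k=1}^m\left(1-\frac{t_{m+1}^q}{t_k^q}\right)\right)^{1/q},
\]
and the method $\Phi_m^*$ is the same object in both statements (it does not depend on the $\varepsilon_k$). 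Therefore both the upper bound $\mathcal{E}(W^T_q,I^n_{\varepsilon,\infty},\Phi_m^*)$ and the matching lower bound transfer directly.

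Since every step is a literal substitution into an already-proved theorem, there is no real obstacle here; the only point that deserves a sentence of care is the verification that the $\ell^n_\infty$-ball is the Cartesian product of intervals, and that the monotonicity of $t$ used inside the proof of Theorem~\ref{thm_1} is unaffected by taking the $\varepsilon_k$ equal. I would thus present this as a short corollary-style argument: state that $I^n_{\varepsilon,\infty}=I^n_{\bar\varepsilon}$ with $\bar\varepsilon=(\varepsilon,\ldots,\varepsilon)$, invoke Theorem~\ref{thm_1}, and simplify. The whole proof should be no more than two or three lines.
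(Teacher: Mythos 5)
Your proposal is correct and matches the paper exactly: the paper states Theorem~\ref{thm_2} as an immediate corollary of Theorem~\ref{thm_1} obtained by setting $\varepsilon_1=\ldots=\varepsilon_n=\varepsilon$, which is precisely your argument. The only added content in your write-up, the explicit remark that $B\left[\varepsilon,\ell^n_\infty\right]$ is the product of the intervals $B[\varepsilon]$, is a harmless (and correct) elaboration of what the paper leaves implicit.
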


\subsubsection{Case $0 < p\le q$}

\begin{theorem}
\label{thm_3}
    {\sl Let $n\in\mathbb{N}$, $1\le q < \infty$ and  $0<p\le q$. If $\varepsilon \in \left[0, t_1\right]$ then
    \[
        \mathcal{E}\left(W^T_q,I^n_{\varepsilon,p}\right) = \mathcal{E}\left(W^T_q,I^n_{\varepsilon,p},\Phi_n^*\right) =\left( {t^q_{n+1}+\varepsilon^q\bigg(1-\frac{t^q_{n+1}}{t^q_{1}}\bigg)}\right)^{1/q},
    \]
    and if $\varepsilon > t_1$ then $\mathcal{E}\left(W^T_q,I^n_ {\varepsilon,p}\right) = \mathcal{E}\left(W^T_q,I^n_ {\varepsilon,p},\Phi_0^*\right) =t_1$. 
    }
\end{theorem}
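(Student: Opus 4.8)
The plan is to establish matching upper and lower bounds, treating the two regimes $\varepsilon\le t_1$ and $\varepsilon>t_1$ separately.

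For the upper bound when $\varepsilon\le t_1$, I would estimate the error of $\Phi_n^*$ following the proof of Theorem~\ref{thm_1} with $m=n$, the only change being that the coordinate-wise constraints $|x_k-a_k|\le\varepsilon_k$ are replaced by the single constraint $\sum_{k=1}^n|x_k-a_k|^p\le\varepsilon^p$. Writing $x=Th\in W^T_q$, $a\in I^n_{\varepsilon,p}(x)$, and using $x_k-a_k(1-t^q_{n+1}/t^q_k)=(1-t^q_{n+1}/t^q_k)(x_k-a_k)+(t^q_{n+1}/t^q_k)x_k$ together with the convexity inequality, the identity $|x_k|^q=t^q_k|h_k|^q$, and the monotonicity of $t$, one obtains exactly as in Theorem~\ref{thm_1}
\[
    \|x-\Phi_n^*(a)\|_q^q\le\sum_{k=1}^n\Big(1-\frac{t^q_{n+1}}{t^q_k}\Big)|x_k-a_k|^q+t^q_{n+1}.
\]
The new ingredient is to apply monotonicity once more, $1-t^q_{n+1}/t^q_k\le 1-t^q_{n+1}/t^q_1$ for $k\le n$, to factor out a constant, and then invoke the embedding $\|\cdot\|_{\ell^n_q}\le\|\cdot\|_{\ell^n_p}$ (valid for $0<p\le q$), which gives $\sum_{k=1}^n|x_k-a_k|^q\le\varepsilon^q$. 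This yields $\|x-\Phi_n^*(a)\|_q^q\le t^q_{n+1}+\varepsilon^q(1-t^q_{n+1}/t^q_1)$. When $\varepsilon>t_1$ the upper bound is immediate: $\|x-\Phi_0^*(a)\|_q=\|Th\|_q\le t_1\|h\|_q\le t_1$.

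For the lower bound I would use Corollary~\ref{cor_identity_operator} applied to the (odd) identity operator. When $\varepsilon\le t_1$, take $h^*$ supported on coordinates $1$ and $n+1$, with $h^*_1=\varepsilon/t_1$, $h^*_{n+1}=(1-\varepsilon^q/t^q_1)^{1/q}$, and $h^*_k=0$ otherwise; then $\|h^*\|_q=1$, so $Th^*,-Th^*\in W^T_q$, while $\big((Th^*)_1,\dots,(Th^*)_n\big)=(\varepsilon,0,\dots,0)$ has $\ell^n_p$-norm exactly $\varepsilon$, whence $\theta^n\in I^n_{\varepsilon,p}(Th^*)\cap I^n_{\varepsilon,p}(-Th^*)$. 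Corollary~\ref{cor_identity_operator} then gives $\mathcal{E}(W^T_q,I^n_{\varepsilon,p})\ge\|Th^*\|_q=\big(t^q_{n+1}+\varepsilon^q(1-t^q_{n+1}/t^q_1)\big)^{1/q}$. When $\varepsilon>t_1$, take $h^*$ to be the first coordinate vector: $\|h^*\|_q=1$, and $\big((Th^*)_1,\dots,(Th^*)_n\big)=(t_1,0,\dots,0)$ has $\ell^n_p$-norm $t_1<\varepsilon$, so $\theta^n\in I^n_{\varepsilon,p}(\pm Th^*)$ and $\mathcal{E}(W^T_q,I^n_{\varepsilon,p})\ge t_1$.

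Combining the bounds in each regime finishes the proof. The one step requiring care is the upper estimate: one must check that the coefficients $1-t^q_{n+1}/t^q_k$ and $t^q_{n+1}/t^q_k$ are non-negative and sum to $1$ (true for $k\le n$ by monotonicity of $t$), so that the convexity inequality is legitimate, and then notice that reducing all the coefficients to the single constant $1-t^q_{n+1}/t^q_1$ is precisely what makes the inequality $\|\cdot\|_{\ell^n_q}\le\|\cdot\|_{\ell^n_p}$ applicable — and it is also why the answer changes form once $\varepsilon$ exceeds $t_1$, where $\Phi_0^*$ rather than $\Phi_n^*$ becomes optimal.
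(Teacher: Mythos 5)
Your proposal is correct and follows essentially the same route as the paper: the same convexity decomposition as in Theorem~\ref{thm_1} with $m=n$, then factoring out $1-t^q_{n+1}/t^q_1$ by monotonicity and using the $\ell_p\hookrightarrow\ell_q$ embedding (the paper phrases this as bounding $\sum_k(|x_k-a_k|^p)^{q/p}$ by $\big(\sum_k|x_k-a_k|^p\big)^{q/p}$, which is the same inequality), together with the identical extremal elements $h^*$ for the lower bounds via Corollary~\ref{cor_identity_operator}. No gaps to report.
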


\begin{proof}[Proof]
    First, consider the case $\varepsilon \in \left[0, t_1\right]$. For $x=Th\in W^T_q$, $\|h\|_q\le 1$, and $a\in I_{\varepsilon,p}^n(x)$, we have
    \begin{gather*}
        \left\|x-\Phi_n^*(a)\right\|^q_q = \sum_{k=1}^n\left|x_k-a_k\bigg(1-\frac{t_{n+1}^q}{t_k^q}\bigg)\right|^q+\sum_{k=n+1}^\infty |x_k|^q \\
        =\sum_{k=1}^n \left|\bigg(1-\frac{t_{n+1}^q}{t_k^q}\bigg)(x_k-a_k)+\frac{t_{n+1}^q}{t_k^q}x_k\right|^q +\sum_{k=n+1}^\infty t_k^q|h_k|^q \\
        \le\sum_{k=1}^n\left(\bigg(1-\frac{t_{n+1}^q}{t_k^q}\bigg)|x_k-a_k|^q+\frac{t_{n+1}^q}{t_k^q}|x_k|^q\right)+t_{n+1}^q\sum_{k=n+1}^\infty |h_k|^q \\
        =\sum_{k=1}^n \bigg( 1-\frac{t_{n+1}^q}{t_k^q}\bigg)\big(|x_k-a_k|^p\big)^{q/p}+ t_{n+1}^q\sum_{k=1}^\infty |h_k|^q \\
        \le \left(1 - \frac{t_{n+1}^q}{t_1^q}\right)\left(\sum\limits_{k=1}^n|x_k-a_k|^p\right)^{q/p} + t_{n+1}^q \le \left(1 - \frac{t_{n+1}^q}{t_1^q}\right)\varepsilon^q + t_{n+1}^q.
    \end{gather*}

    Now, we establish the lower estimate for $\mathcal{E}\left(W_q^T,I^n_{\varepsilon,p}\right)$. Let $u_1$ and $u_{n+1}$ be such that $t_1 u_1=\varepsilon$ and $u_1^q+u_{n+1}^q=1$, {\it i.e.} $u_1={\varepsilon}/{t_1}$ and $u_{n+1}^q= 1-{\varepsilon^q}/{t_1^q}$. Set $h^*:=\left(u_1,0,\ldots,0,u_{n+1},0,\ldots\right)$. Obviously, $\|h\|_q\le 1$ and $\theta\in I_{\varepsilon,p}^n(Th^*)$. Then by Corollary~\ref{cor_identity_operator},
    \begin{gather*}
        \left(\mathcal{E}\left(W_q^T, I^n_{\varepsilon,p}\right)\right)^q\ge \|Th^*\|_q^q=t_{1}^qu_1^q+t_{n+1}^qu_{n+1}^q\\ 
        =\varepsilon^q+t_{n+1}^q\bigg(1-\frac{\varepsilon^q}{t_1^q}\bigg)= t^q_{n+1}+\varepsilon^q\bigg(1-\frac{t^q_{n+1}}{t^q_{1}}\bigg).
    \end{gather*}
    
    Finally, consider the case $\varepsilon > t_1$. For $x = Th\in W^T_q$ and $a\in I^n_{\varepsilon, p}(x)$, we have
    \[
        \left\|x - \Phi_0^*(a)\right\|_q^q = \|Th\|_q^q = \sum\limits_{n=1}^\infty t_n^q|h_n|^q \le t_1^q\sum\limits_{n=1}^\infty |h_n|^q \le t_1^q.
    \]
    Taking $h^* := (1,0,\ldots)$, it is clear that $\theta \in I^n_{\varepsilon, p}\left(Th^*\right)$ and by Corollary~\ref{cor_identity_operator},
    \[
        \mathcal{E}\left(W_q^T, I^n_{\varepsilon,p}\right) \ge \|Th^*\|_q = t_1.
    \]
    Theorem is proved. \qedhere
\end{proof}

\subsubsection{Case $1\le q < p < \infty$}
\label{s321}

This case is the most technical case. We introduce some preliminary notations. For $m=1,\ldots,n$, define 
\[
    \delta_{j,m} := \left(1 - \frac{t_{m+1}^q}{t_j^q}\right)^{\frac{p}{p-q}}, \quad j=1,\ldots,m-1,
\]
and set $c_1 := t_1$ and, for $m\ge 2$,
\begin{equation}
\label{c_m}
    c_{m} := \left(\sum\limits_{j=1}^{m}\delta_{j,m}\right)^{1/p}\left(\sum\limits_{j=1}^{m}\frac{\delta_{j,m}^{q/p}}{t_j^q}\right)^{-1/q}.
\end{equation}
The sequence $\left\{c_m\right\}_{m=1}^n$ is non-increasing. Indeed, let $\delta_{j,m}(\xi) := \left(1 - \frac{\xi t_{m}^q+(1-\xi) t_{m+1}^q}{t_j^q}\right)^{\frac{p}{p-q}}$ and consider the function
\[
    g(\xi) := \left(\sum\limits_{j=1}^{m}\delta_{j,m}(\xi)\right)^{1/p}\left(\sum\limits_{j=1}^{m}\frac{\delta_{j,m}^{q/p}(\xi)}{t_j^q}\right)^{-{1}/{q}},\qquad \xi\in[0,1].
\]
Differentiating $g$ and applying the Cauchy-Swartz inequality we have
\begin{gather*}
    g'(\xi) = \displaystyle \frac{t_{m+1}^q-t_m^q}{p-q}\left(\sum\limits_{j=1}^{m}\delta_{j,m}(\xi)\right)^{\frac{1}{p}-1}\left(\sum\limits_{j=1}^{m}\frac{\delta_{j,m}^{q/p}(\xi)}{t_j^q}\right)^{-{1/q}-1} \times \\
    \displaystyle \times\left(\left(\sum\limits_{j=1}^{m}\frac{\delta_{j,m}^{{q/p}}(\xi)}{t_j^q}\right)^2 - \left(\sum\limits_{j=1}^{m}\delta_{j,m}(\xi)\right)\left(\sum\limits_{j=1}^{m}\frac{\delta_{j,m}^{2{q/p}-1}(\xi)}{t_j^{2q}}\right)\right) \ge 0
\end{gather*}
Hence, $c_{m+1} = g(0) \le g(1) = c_{m}$.

For convenience, for $\lambda\in[0,1]$ denote $t_{m,\lambda}^q := (1- \lambda)t_{m+1}^q + \lambda t_m^q$.

\begin{theorem}
\label{thm_4}
    {\sl Let $n\in\mathbb{N}$ and $1\le q<p<\infty$. 
    \begin{enumerate}
        \item[1.] If $\varepsilon \le c_{n}$ then 
        \[
            \mathcal{E}\left(W^T_q,I^n_{\varepsilon,p}\right) = \mathcal{E}\left(W^T_q,I^n_{\varepsilon,p},\Phi_n^*\right) = \left(t_{n+1}^q + \varepsilon^q \left(\sum\limits_{j=1}^n\left(1 - \frac{t_{n+1}^q}{t_j^q}\right)^{\frac{p}{p-q}}\right)^{\frac{p-q}{p}}\right)^{1/q};
        \]
        \item[2.] If $\varepsilon \in \left(c_n, c_1\right]$ then there exist $m\in\{1,\ldots,n-1\}$ such that $\varepsilon\in \left(c_{m+1},c_m\right]$ and $\lambda = \lambda(\varepsilon)\in[0,1)$ such that
        \begin{equation}
        \label{lambda_condition}
            \varepsilon = \left(\sum\limits_{j=1}^{m}\left(1 - \frac{t_{m,\lambda}^q}{t_j^q}\right)^{\frac{p}{p-q}}\right)^\frac{1}{p}\left(\sum\limits_{j=1}^{m}\frac{\left(1 - \frac{t_{m,\lambda}^q}{t_j^q}\right)^{\frac{q}{p-q}}}{t_j^q}\right)^{-{1/q}}.
        \end{equation}
        Then
        \[
            \mathcal{E}\left(W^T_q, I_{\varepsilon,p}^n\right) = \mathcal{E}\left(W^T_q,I^n_{\varepsilon,p},\Phi_{m,\lambda}^*\right) = \left(t_{m,\lambda}^q + \varepsilon^q\cdot \left(\sum\limits_{j=1}^m \left(1 - \frac{t_{m,\lambda}^q}{t_j^q}\right)^{\frac p{p-q}}\right)^{\frac{p-q}{p}}\right)^{1/q},
        \]
        where
        \[
            \Phi^*_{m,\lambda}(a) = \left(a_1\left(1 - \frac{t_{m,\lambda}^q}{t_1^q}\right),\ldots, a_m\left(1 - \frac{t_{m,\lambda}^q}{t_m^q}\right),0,\ldots\right),\qquad a\in \ell_p.
        \]
        \item[3.] If $\varepsilon > c_1$ then $\mathcal{E}\left(W^T_q,I_{\varepsilon,p}^n\right) = \mathcal{E}\left(W^T_q,I_{\varepsilon,p}^n,\Phi_0^*\right) = t_1$. 
    \end{enumerate}}
\end{theorem}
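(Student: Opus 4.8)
The plan is to prove, in each of the three regimes, a matching pair of estimates: the upper bound by bounding the error of the indicated recovery method ($\Phi_n^*$, $\Phi_{m,\lambda}^*$, or $\Phi_0^*$), and the lower bound via Corollary~\ref{cor_identity_operator} applied to the odd operator $\mathrm{id}_X$ together with an explicit extremal element of $W^T_q$.

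\textbf{Upper estimates.} Fix $x = Th \in W^T_q$, $\|h\|_q \le 1$, and $a \in I^n_{\varepsilon,p}(x)$, so that $\sum_{k=1}^n |x_k - a_k|^p \le \varepsilon^p$. For cases~1 and~2 I would repeat the computation from the proof of Theorem~\ref{thm_3}, writing $t_* := t_{n+1}$ and $M := n$ in case~1, and $t_* := t_{m,\lambda}$ and $M := m$ in case~2. Since $t$ is non-increasing, $0 \le 1 - t_*^q/t_k^q \le 1$ for $k \le M$; splitting
\[
    x_k - a_k\Bigl(1 - \frac{t_*^q}{t_k^q}\Bigr) = \Bigl(1 - \frac{t_*^q}{t_k^q}\Bigr)(x_k - a_k) + \frac{t_*^q}{t_k^q}\,x_k
\]
and using convexity of $|\cdot|^q$, the contributions $\frac{t_*^q}{t_k^q}|x_k|^q$ ($k \le M$), together with the tail $\sum_{k>M}|x_k|^q \le t_*^q\sum_{k>M}|h_k|^q$ (valid since $t_{M+1}\le t_*$), add up to $t_*^q\|h\|_q^q \le t_*^q$, while the contributions $\bigl(1 - \frac{t_*^q}{t_k^q}\bigr)|x_k - a_k|^q$ are bounded by H\"older's inequality with conjugate exponents $\frac{p}{p-q}$ and $\frac{p}{q}$:
\[
    \sum_{k=1}^{M}\Bigl(1 - \frac{t_*^q}{t_k^q}\Bigr)|x_k - a_k|^q \le \Bigl(\,\sum_{k=1}^{M}\bigl(1 - \tfrac{t_*^q}{t_k^q}\bigr)^{\frac{p}{p-q}}\Bigr)^{\frac{p-q}{p}}\varepsilon^q .
\]
Adding the two contributions gives exactly the asserted value of $\mathcal{E}(W^T_q, I^n_{\varepsilon,p}, \Phi_n^*)$, respectively $\mathcal{E}(W^T_q, I^n_{\varepsilon,p}, \Phi_{m,\lambda}^*)$. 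Case~3 is immediate: $\|x - \Phi_0^*(a)\|_q = \|Th\|_q \le t_1\|h\|_q \le t_1$.

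\textbf{Lower estimates.} In each regime I would exhibit $h^* \in \ell_q$ with $\|h^*\|_q \le 1$ whose first $n$ coordinates, scaled by $t$, have $\ell_p^n$-norm $\le \varepsilon$; then $\theta \in I^n_{\varepsilon,p}(Th^*)\cap I^n_{\varepsilon,p}(-Th^*)$, so Corollary~\ref{cor_identity_operator} gives $\mathcal{E}(W^T_q, I^n_{\varepsilon,p}) \ge \|Th^*\|_q$. In case~3 take $h^* = (1,0,\dots)$: since $\varepsilon > c_1 = t_1$, the constraint $t_1 \le \varepsilon$ holds and $\|Th^*\|_q = t_1$. In case~1 take $h^*$ supported on $\{1,\dots,n+1\}$ with $|h_j^*|^q = \varepsilon^q\delta_{j,n}^{q/p}\bigl(t_j^q(\sum_{i=1}^n\delta_{i,n})^{q/p}\bigr)^{-1}$ for $j \le n$ (which makes the $\ell_p^n$-norm of its first $n$ scaled coordinates equal $\varepsilon$) and $|h_{n+1}^*|^q = 1 - \sum_{j=1}^n |h_j^*|^q$; the definition~\eqref{c_m} of $c_n$ yields $\sum_{j=1}^n |h_j^*|^q = (\varepsilon/c_n)^q \le 1$, so $h^*$ is admissible, and the identity $\sum_j t_j^q|h_j^*|^q = t_{n+1}^q\|h^*\|_q^q + \sum_{j\le n}(t_j^q - t_{n+1}^q)|h_j^*|^q$, combined with $t_j^q - t_{n+1}^q = t_j^q\delta_{j,n}^{(p-q)/p}$, gives $\|Th^*\|_q^q = t_{n+1}^q + \varepsilon^q(\sum_{j=1}^n\delta_{j,n})^{(p-q)/p}$, as claimed.

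\textbf{Case~2 and the main obstacle.} Here the extremal element has no free reservoir coordinate. First, using the monotonicity of $\{c_m\}_{m=1}^n$ established above and the continuity of the interpolant $g$, one locates $m\in\{1,\dots,n-1\}$ with $\varepsilon\in(c_{m+1},c_m]$ and, by the intermediate value theorem applied to $g$, a value $\lambda\in[0,1)$ solving~\eqref{lambda_condition}; then one takes $h^*$ supported on $\{1,\dots,m\}$ with $|h_j^*|^q$ proportional to $(1 - t_{m,\lambda}^q/t_j^q)^{q/(p-q)}/t_j^q$, normalized so that $\|h^*\|_q = 1$. Equation~\eqref{lambda_condition} is precisely the statement that, with this normalization, the $\ell_p^n$-norm of the first $n$ scaled coordinates equals $\varepsilon$; and the same telescoping identity, now with $t_{m,\lambda}$ in place of $t_{n+1}$, produces $\|Th^*\|_q^q = t_{m,\lambda}^q + \varepsilon^q\bigl(\sum_{j=1}^m(1 - t_{m,\lambda}^q/t_j^q)^{\frac{p}{p-q}}\bigr)^{\frac{p-q}{p}}$. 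The delicate points — and the main obstacle — are (i) proving that for every $\varepsilon\in(c_n,c_1]$ such a pair $(m,\lambda)$ with $\lambda\in[0,1)$ exists, which rests on $g$ sweeping continuously and monotonically between consecutive values $c_{m+1}$ and $c_m$; and (ii) checking that the chosen $h^*$ is both feasible (its $\ell_q$-norm is $\le 1$, with the $\ell_p^n$-constraint active) and turns the H\"older step of the upper bound into an equality, so that the two estimates coincide. Careful treatment of the boundary situations ($\lambda = 0$, $\varepsilon = c_m$, and ties $t_j = t_{j+1}$, handled via the conventions fixed above) is the fussiest part.
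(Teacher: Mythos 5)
Your proposal is correct and follows essentially the same route as the paper: the identical convexity-plus-H\"older upper bound for $\Phi_n^*$, $\Phi_{m,\lambda}^*$, $\Phi_0^*$, and the same extremal elements (with $|h_j^*|^q\propto(1-t_*^q/t_j^q)^{q/(p-q)}/t_j^q$, plus a reservoir coordinate in case 1) fed into Corollary~\ref{cor_identity_operator} for the matching lower bounds. The existence of $(m,\lambda)$ satisfying~\eqref{lambda_condition}, which you flag as the main obstacle, is exactly what the paper's monotonicity/continuity argument for $g$ and $\{c_m\}$ in Subsection~\ref{s321} provides, so no new idea is missing.
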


\begin{proof}[Proof]
    Let $m\in\{0,\ldots,n\}$, $\lambda\in[0,1]$ and $\Phi$ be either $\Phi_n^*$, or $\Phi_0^*$, or $\Phi_{m,\lambda}^*$. For $x\in W^T_q$ and $a\in I^n_{\varepsilon,p}(x)$, 
    \begin{gather*}
        \left\|x - \Phi(a)\right\|_q^q  \le \displaystyle \sum\limits_{k=1}^m \left|\left(1-\frac{t_{m,\lambda}^q}{t_k^q}\right)(x_k-a_k) + \frac{t_{m,\lambda}^q}{t_k^q} x_k\right|^q + \sum\limits_{k=m+1}^\infty |x_k|^q \\
        \le \displaystyle \sum\limits_{k=1}^m \left(1 - \frac{t_{m,\lambda}^q}{t_k^q}\right)\left|x_k - a_k\right|^q + \sum\limits_{k=1}^m t_{m,\lambda}^q |h_k|^q + \sum\limits_{k=m+1}^\infty t_k^q|h_k|^q. 
    \end{gather*}
    Using the H\"older inequality with parameters ${p}/{(p-q)}$ and ${p}/{q}$ to estimate the first term and inequality $t_k^q\le t_{m,\lambda}^q$, $k=m+1,m+2,\ldots$, we obtain
    \begin{gather*}
        \left\|x - \Phi(a)\right\|_q^q \le \displaystyle \left\{\sum\limits_{k=1}^m\left(1 - \frac{t_{m,\lambda}^q}{t_k^q}\right)^{\frac{p}{p-q}}\right\}^{1-{q}/{p}}\left\{\sum\limits_{k=1}^m|x_k-a_k|^p\right\}^{q/p} + t_{m,\lambda}^q \sum\limits_{k=1}^\infty h_k^q\\
        \le \displaystyle \left\{\sum\limits_{k=1}^m\left(1 - \frac{t_{m,\lambda}^q}{t_k^q}\right)^{\frac{p}{p-q}}\right\}^{1-{q}/{p}}\varepsilon^q + t_{m,\lambda}^q,
    \end{gather*}
    which proves the estimate from above.
    
    Now, we turn to the proof of the lower estimate. First, let $\varepsilon \le c_{n}$, and define 
    \[
        u_j := \frac{\varepsilon\,\delta_{j,n}^{1/p}}{t_j}\left(\sum\limits_{j=1}^n\delta_{j,n}\right)^{-1/p}, \quad j=1,\ldots,n,\qquad\text{and}\qquad u_{n+1}:=\left(1 - \sum\limits_{j=1}^n u_j^q\right)^{1/q}.
    \]
    Consider $h^* :=\left(u_1,\ldots,u_{n+1},0,\ldots\right)$. Evidently, $u_{n+1}$ is well-defined as
    \[
        \sum\limits_{j=1}^n u_j^q = \varepsilon^q \left(\sum\limits_{j=1}^n\delta_{j,n}\right)^{-q/p} \sum\limits_{j=1}^n \frac{\delta_{j,n}^{q/p}}{t_j^q} = \frac{\varepsilon^q}{c^q_{n}} \le 1, 
    \]
    $\|h^*\|_q = 1$ and $\theta\in I_{\varepsilon,p}^n(Th^*)$ as  $\sum_{j=1}^n t_j^ph_j^p = \varepsilon^p$. Hence, by Corollary~\ref{cor_identity_operator},
    \begin{gather*}
        \left(\mathcal{E}\left(W^T_q,I^n_{\varepsilon,p}\right)\right)^q \ge \displaystyle \left\|Th^*\right\|_q^q \\ 
        = \varepsilon^q \sum\limits_{j=1}^n \delta_{j,n}^{q/p} \left(\sum\limits_{j=1}^n\delta_{j,n}\right)^{-q/p} + t_{n+1}^q - t_{n+1}^q\sum\limits_{j=1}^n \varepsilon^q\frac{\delta_{j,n}^{q/p}}{t_j^q}\left(\sum\limits_{j=1}^n\delta_{j,n}\right)^{-{q}/{p}} \\ 
        = \displaystyle \varepsilon^q\left(\sum\limits_{j=1}^n\delta_{j,n}\right)^{-q/p} \sum\limits_{j=1}^n \delta_{j,n}^{q/p} \left(1 - \frac{t_{n+1}^q}{t_j^q}\right) + t_{n+1}^q \\
        = \displaystyle t_{n+1}^q + \varepsilon^q\cdot \left(\sum\limits_{j=1}^n\left(1 - \frac{t_{n+1}^q}{t_j^q}\right)^{\frac{p}{p-q}}\right)^{\frac{p-q}{p}}.
    \end{gather*}
    
    Next, let $m\in\{1,2,\ldots,n-1\}$ be such that $c_{m+1}<\varepsilon\le c_{m}$ and $\lambda = \lambda_\varepsilon\in[0,1)$ be defined by~\eqref{lambda_condition}. Set 
    \[
        u_j := \frac{\varepsilon \delta^{1/p}_{j,m}(\lambda)}{t_j}\left(\sum\limits_{j=1}^m\delta_{j,m}(\lambda)\right)^{-1/p},\quad j=1,\ldots,m, 
    \]
    and consider $h^* = \left(u_1,\ldots,u_m,0,\ldots\right)$. Clearly, $\|h\|_q = 1$ and $\theta\in I_{\varepsilon,p}^n(Th^*)$. Using Corollary~\ref{cor_identity_operator}, we obtain the desired lower estimate for $\mathcal{E}\left(W^T_q,I^n_{\varepsilon,p}\right)$. 
    
    Finally, let $\varepsilon > c_1$. Consider $h^* := \left(1,0,0,\ldots\right)$. Since $c_1 = t_1$, we have $\theta\in I_{\varepsilon,p}^n(Th^*)$. Hence, by Corollary~\ref{cor_identity_operator}, $\mathcal{E}\left(W^T_q,I^n_{\varepsilon,p}\right) \ge \left\|Th^\ast\right\|_q = t_1^q$.
\end{proof}

\subsection{Information mapping $I(x) = I_{\varepsilon,p}(x) := x + B\left[\varepsilon, \ell_p\right]$}

As a limiting case from Theorem~\ref{thm_2},~\ref{thm_3} and~\ref{thm_4} we can obtain the following corollaries. 

\begin{theorem}
\label{thm5}
    {\sl Let $1\le q < \infty$ and $\varepsilon\ge 0$. Choose $m\in\mathbb{Z}_+$ to be such that
    \[
        1 - \varepsilon^q\sum\limits_{k=1}^m\frac{1}{t_k^q} \ge 0\qquad\textrm{and}\qquad 1 - \varepsilon^q\sum\limits_{k=1}^{m+1}\frac{1}{t_k^q} < 0.
    \]
    Then
    \[
        \mathcal{E}\left(W^T_q,I_{\varepsilon,\infty}\right) = \mathcal{E}\left(W^T_q,I_{\varepsilon,\infty},\Phi_m^*\right) = \left(t_{m+1}^q + \varepsilon^q \sum\limits_{k=1}^m\left(1 - \frac{t_{m+1}^q}{t_k^q}\right)\right)^{1/q}.
    \]}
\end{theorem}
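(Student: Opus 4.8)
The plan is to obtain Theorem~\ref{thm5} as the limiting case $n\to\infty$ of Theorem~\ref{thm_2}, which handles the information mapping $I^n_{\varepsilon,\infty}$. The key observation is that $I_{\varepsilon,\infty}(x) = x + B[\varepsilon,\ell_\infty]$ provides \emph{strictly more} information than $I^n_{\varepsilon,\infty}(x) = (x_1,\ldots,x_n) + B[\varepsilon,\ell_\infty^n]$ for every $n$ (since knowing all coordinates with uniform error $\varepsilon$ implies knowing the first $n$ with the same error), so $\mathcal{E}(W^T_q,I_{\varepsilon,\infty}) \le \mathcal{E}(W^T_q,I^n_{\varepsilon,\infty})$ for all $n$. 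First I would show that the value in Theorem~\ref{thm_2}, call it $E_n := \left(t_{n,m(n)+1}^q + \varepsilon^q\sum_{k=1}^{m(n)}(1-t_{m(n)+1}^q/t_k^q)\right)^{1/q}$ where $m(n)$ is the cutoff index for the first $n$ terms, stabilizes: since the sequence $t$ is non-increasing and positive, the condition $1-\varepsilon^q\sum_{k=1}^{m+1}1/t_k^q<0$ eventually fixes a single value $m$ (the one in the statement of Theorem~\ref{thm5}) once $n\ge m+1$, because the partial sums $\sum_{k=1}^{m+1}1/t_k^q$ are increasing in $m$. Hence for all large $n$, $E_n$ equals exactly the claimed right-hand side, giving the upper bound $\mathcal{E}(W^T_q,I_{\varepsilon,\infty}) \le \left(t_{m+1}^q+\varepsilon^q\sum_{k=1}^m(1-t_{m+1}^q/t_k^q)\right)^{1/q}$.

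For the matching lower bound I would construct the extremal element directly in $\ell_q$, mimicking the construction in the proof of Theorem~\ref{thm_1}: set
\[
    u_k := \frac{\varepsilon}{t_k},\quad k=1,\ldots,m,\qquad u_{m+1} := \left(1 - \varepsilon^q\sum_{k=1}^m\frac{1}{t_k^q}\right)^{1/q},
\]
and $u_k := 0$ for $k>m+1$, and let $h^* = (u_1,\ldots,u_m,u_{m+1},0,\ldots)$. By the choice of $m$ we have $\|h^*\|_q = 1$, so $Th^*\in W^T_q$, and all of $|t_1u_1|,\ldots,|t_mu_m|$ equal $\varepsilon$ while the remaining coordinates of $Th^*$ that are nonzero have modulus $t_{m+1}u_{m+1} \le t_{m+1} \le \varepsilon$ — wait, this last inequality need not hold; instead the correct statement is simply $|t_ku_k|\le\varepsilon$ for all $k$: for $k\le m$ it is exactly $\varepsilon$, for $k=m+1$ we have $t_{m+1}u_{m+1}\le t_{m+1}\cdot 1$, and we need $t_{m+1}u_{m+1}\le\varepsilon$. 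One checks this from $1-\varepsilon^q\sum_{k=1}^{m+1}1/t_k^q<0$, i.e. $u_{m+1}^q = 1-\varepsilon^q\sum_{k=1}^m 1/t_k^q < \varepsilon^q/t_{m+1}^q$, hence $t_{m+1}u_{m+1}<\varepsilon$. Thus $\theta \in I_{\varepsilon,\infty}(Th^*)$, and since $-h^*$ works the same way with $\theta\in I_{\varepsilon,\infty}(-Th^*)$, Corollary~\ref{cor_identity_operator} yields $\mathcal{E}(W^T_q,I_{\varepsilon,\infty}) \ge \|Th^*\|_q = \left(\sum_{k=1}^m\varepsilon^q + t_{m+1}^q u_{m+1}^q\right)^{1/q} = \left(t_{m+1}^q + \varepsilon^q\sum_{k=1}^m(1-t_{m+1}^q/t_k^q)\right)^{1/q}$, matching the upper bound.

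Finally, I would verify that $\Phi_m^*$ attains this error for the mapping $I_{\varepsilon,\infty}$: this is essentially the computation in the proof of Theorem~\ref{thm_1} verbatim, since for $x = Th\in W^T_q$ and $a\in I_{\varepsilon,\infty}(x)$ one has $|x_k - a_k|\le\varepsilon$ for \emph{all} $k$, so in particular for $k=1,\ldots,m$, and the convexity estimate there never used the terms with index $>n$. The only subtlety worth a sentence is the case $\varepsilon = 0$ (then $m=0$ and $\Phi_0^* = \theta$ recovers $W^T_q$ with error $t_1$, consistent with the formula) and the edge case where $u_{m+1}=0$, which causes no difficulty. I expect the main — though modest — obstacle to be bookkeeping the stabilization of the index $m$ and confirming carefully the inequality $t_{m+1}u_{m+1}\le\varepsilon$ so that $\theta$ genuinely lies in $I_{\varepsilon,\infty}(Th^*)$; everything else is a direct transcription of the arguments already given for Theorems~\ref{thm_1} and~\ref{thm_2}.
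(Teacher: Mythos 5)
Your argument is correct and follows the route the paper itself intends: the paper merely states Theorem~\ref{thm5} as a limiting case of Theorem~\ref{thm_2} without writing out the details, and you supply exactly those details. Worth noting in your favour: the comparison with $I^n_{\varepsilon,\infty}$ only yields the \emph{upper} bound, since $I_{\varepsilon,\infty}$ is more informative and the inequality $\mathcal{E}(W^T_q,I_{\varepsilon,\infty})\le\mathcal{E}(W^T_q,I^n_{\varepsilon,\infty})$ points the wrong way for a lower bound; hence your direct construction of $h^*$ in $\ell_q$, and in particular the verification $t_{m+1}u_{m+1}<\varepsilon$ from the condition $1-\varepsilon^q\sum_{k=1}^{m+1}t_k^{-q}<0$, is genuinely needed and is the same extremal construction used in Theorems~\ref{thm_1} and~\ref{thm_2}; likewise your observation that the upper-bound computation for $\Phi_m^*$ transfers verbatim (only $|x_k-a_k|\le\varepsilon$ for $k\le m$ is used) settles the optimality of $\Phi_m^*$ for the full-sequence information. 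Two small corrections: first, the parenthetical about $\varepsilon=0$ is wrong --- with exact information the identity method recovers every $x\in W^T_q$ with error $0$, not $t_1$; in fact for $\varepsilon=0$ no $m$ satisfies the second displayed condition, so the theorem simply does not cover that case and the aside should be deleted rather than declared ``consistent with the formula.'' Second, when you invoke stabilization of the index you should note why a finite $m$ exists for every $\varepsilon>0$: since $t_k\le t_1$, one has $\sum_{k=1}^{m+1}t_k^{-q}\ge (m+1)t_1^{-q}\to\infty$, so the second condition eventually holds. Neither point affects the core of your proof.
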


\begin{theorem}
    {\sl Let $1\le q < \infty$ and $0 < p \le q$. If $0 \le \varepsilon \le t_1$ then $\mathcal{E}\left(W^T_q,I_{\varepsilon,p}\right) = \mathcal{E}\left(W^T_q,I_{\varepsilon,p},{\rm id}\right) = \varepsilon$, and if $\varepsilon > t_1$ then $\mathcal{E}\left(W^T_q,I_{\varepsilon,p}\right) = \mathcal{E}\left(W^T_q,I_{\varepsilon,p},\Phi_0^*\right) = t_1$.}
\end{theorem}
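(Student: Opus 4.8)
The plan is to argue directly, in the spirit of Theorems~\ref{thm_2}--\ref{thm_4}: exhibit an explicit method whose error equals the asserted value, and match it from below via Corollary~\ref{cor_identity_operator}. (Although the statement is advertised as a limiting case, one should not literally pass to the limit $n\to\infty$ in the formula of Theorem~\ref{thm_3}: the sequence $t_k$ need not tend to $0$ --- e.g. $t_k\equiv 1$ --- and then that formula tends to $t_1\neq\varepsilon$; the direct argument below is both shorter and unconditional.) Write $e_1:=(1,0,0,\ldots)$.

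For the \emph{upper estimate} when $0\le\varepsilon\le t_1$ I would take $\Phi=\mathrm{id}$. If $x=Th\in W^T_q$ and $a\in I_{\varepsilon,p}(x)$, then $z:=a-x$ satisfies $\|z\|_p\le\varepsilon$, hence $|z_k|\le\varepsilon$ for every $k$; since $0<p\le q<\infty$ this gives $\|z\|_q^q=\sum_k|z_k|^q\le\varepsilon^{q-p}\sum_k|z_k|^p\le\varepsilon^q$, i.e. $\|x-\Phi(a)\|_q\le\varepsilon$ uniformly in $x$ and $a$, so $\mathcal{E}(W^T_q,I_{\varepsilon,p},\mathrm{id})\le\varepsilon$. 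When $\varepsilon>t_1$ I would instead take $\Phi=\Phi_0^*$: for $x=Th\in W^T_q$ and any $a$ one has $\|x-\Phi_0^*(a)\|_q=\|Th\|_q=\bigl(\sum_k t_k^q|h_k|^q\bigr)^{1/q}\le t_1\|h\|_q\le t_1$, giving $\mathcal{E}(W^T_q,I_{\varepsilon,p},\Phi_0^*)\le t_1$.

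For the \emph{lower estimate}, in the regime $0\le\varepsilon\le t_1$ I would take $\tilde x:=\varepsilon e_1=T\bigl((\varepsilon/t_1)e_1\bigr)$: since $\|(\varepsilon/t_1)e_1\|_q=\varepsilon/t_1\le 1$ we get $\pm\tilde x\in W^T_q$, while $\|\tilde x\|_p=\varepsilon$ forces $\theta\in\tilde x+B[\varepsilon,\ell_p]=I_{\varepsilon,p}(\tilde x)$ and, likewise, $\theta\in I_{\varepsilon,p}(-\tilde x)$; as $\mathrm{id}$ is odd, Corollary~\ref{cor_identity_operator} then yields $\mathcal{E}(W^T_q,I_{\varepsilon,p})\ge\|\tilde x\|_q=\varepsilon$. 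In the regime $\varepsilon>t_1$ I would take $\tilde x:=t_1e_1=Te_1\in W^T_q$; here $\|\tilde x\|_p=t_1<\varepsilon$, so again $\theta\in I_{\varepsilon,p}(\pm\tilde x)$ and Corollary~\ref{cor_identity_operator} gives $\mathcal{E}(W^T_q,I_{\varepsilon,p})\ge t_1$.

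Combining the estimates with the trivial $\mathcal{E}(W^T_q,I_{\varepsilon,p})\le\mathcal{E}(W^T_q,I_{\varepsilon,p},\Phi)$ would close the argument: for $\varepsilon\le t_1$ one gets $\varepsilon\le\mathcal{E}(W^T_q,I_{\varepsilon,p})\le\mathcal{E}(W^T_q,I_{\varepsilon,p},\mathrm{id})\le\varepsilon$, and for $\varepsilon>t_1$ one gets $t_1\le\mathcal{E}(W^T_q,I_{\varepsilon,p})\le\mathcal{E}(W^T_q,I_{\varepsilon,p},\Phi_0^*)\le t_1$. I do not expect a genuine obstacle here; the only point deserving a line of care is the contractive embedding $\ell_p\hookrightarrow\ell_q$ used in the upper estimate, which still holds when $p<1$ (where $\|\cdot\|_p$ is merely a quasi-norm) by the elementary computation above, so the whole argument goes through verbatim.
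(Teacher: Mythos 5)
Your proof is correct: the upper bounds via $\mathrm{id}$ (using $\|z\|_q\le\|z\|_p\le\varepsilon$ for $0<p\le q$) and via $\Phi_0^*$, and the lower bounds via Corollary~\ref{cor_identity_operator} applied to $\tilde x=\varepsilon e_1$ (resp.\ $t_1e_1$), are all valid, and the hypotheses of the corollary ($\pm\tilde x\in W^T_q$, $\theta\in I_{\varepsilon,p}(\tilde x)\cap I_{\varepsilon,p}(-\tilde x)$) are checked. However, your route differs from the paper's: the paper gives no standalone proof and obtains this theorem ``as a limiting case'' of Theorem~\ref{thm_3}, i.e.\ by letting $n\to\infty$ in the finite-information result, whereas you argue directly with a two-point lower bound and an explicit one-line upper bound. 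Your parenthetical criticism of the limiting route is well taken: the formula of Theorem~\ref{thm_3} tends to $\bigl(t_{n+1}^q+\varepsilon^q(1-t_{n+1}^q/t_1^q)\bigr)^{1/q}\to\varepsilon$ only when $t_{n+1}\to0$, and the passage from $I^n_{\varepsilon,p}$ to $I_{\varepsilon,p}$ anyway only gives an upper bound (full-sequence information is at least as informative as its truncation), so the limit argument needs both the assumption $t_n\to0$ and a separate lower bound; your direct argument dispenses with both and covers, e.g., $t_k\equiv1$, where the finite-information error stays at $t_1$ while the full-information error is $\varepsilon$. So your proof is not only correct but slightly more general and self-contained than the derivation the paper intends; the paper's route, in exchange, requires no new construction once Theorem~\ref{thm_3} is available.
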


Define the sequence $\left\{c_n\right\}_{n=1}^\infty$ using formulas~\eqref{c_m}. It is not difficult to verify that $\left\{c_n\right\}_{n=1}^\infty$ is non-increasing and tend to $0$ as $n\to\infty$.

\begin{theorem}
\label{thm7}
    {\sl Let $1\le q < p < \infty$. If $\varepsilon \in \left(0, c_1\right]$ then there exists $m\in\mathbb{N}$ such that $\varepsilon\in \left(c_{m+1},c_m\right]$ and $\lambda = \lambda(\varepsilon)\in[0,1)$ such that
        \begin{equation}
        \label{lambda_condition1}
            \varepsilon = \left(\sum\limits_{j=1}^{m}\left(1 - \frac{t_{m,\lambda}^q}{t_j^q}\right)^{\frac{p}{p-q}}\right)^{1/p}\left(\sum\limits_{j=1}^{m}\frac{\left(1 - \frac{t_{m,\lambda}^q}{t_j^q}\right)^{\frac{q}{p-q}}}{t_j^q}\right)^{-{1/q}}.
        \end{equation}
        Then
        \[
            \mathcal{E}\left(W^T_q, I_{\varepsilon,p}\right) = \mathcal{E}\left(W^T_q,I_{\varepsilon,p},\Phi_{m,\lambda}^*\right) = \left(t_{m,\lambda}^q + \varepsilon^q\cdot \left(\sum\limits_{j=1}^m \left(1 - \frac{t_{m,\lambda}^q}{t_j^q}\right)^{\frac p{p-q}}\right)^{\frac{p-q}{p}}\right)^{1/q}.
        \]
        where the method $\Phi^*_{m,\lambda}$ is defined in Theorem~\ref{thm_3}. Otherwise, if $\varepsilon > c_1$ then $\mathcal{E}\left(W^T_q,I_{\varepsilon,p}\right) = \mathcal{E}\left(W^T_q,I_{\varepsilon,p},\Phi_0^*\right) = t_1$. }
\end{theorem}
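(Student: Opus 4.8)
The plan is to read off Theorem~\ref{thm7} from Theorem~\ref{thm_4} by passing to the limit in the number $n$ of observed coordinates, using the two facts recorded just before the statement: the sequence $\{c_n\}_{n=1}^{\infty}$ from~\eqref{c_m} is non-increasing and tends to $0$. The latter is exactly what makes the half-open intervals $(c_{m+1},c_m]$, $m\ge1$, cover $(0,c_1]$, so that part~1 of Theorem~\ref{thm_4} (the regime $\varepsilon\le c_n$) has no counterpart here and only parts~2 and~3 survive. So I fix $\varepsilon\in(0,c_1]$, pick $m$ with $\varepsilon\in(c_{m+1},c_m]$, and pick $\lambda=\lambda(\varepsilon)\in[0,1)$ from~\eqref{lambda_condition1}; the existence of $\lambda$ is the same intermediate-value argument, based on the monotone function $g$ introduced before Theorem~\ref{thm_4}, that was used there, since nothing in $g$ depends on $n$.

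For the \emph{upper bound} I would use that full information dominates truncated information. If $z\in I_{\varepsilon,p}(x)=x+B[\varepsilon,\ell_p]$, then $(z_1,\dots,z_n)\in I_{\varepsilon,p}^n(x)$ because $\|(z_k-x_k)_{k=1}^n\|_{\ell_p^n}\le\|z-x\|_p\le\varepsilon$; composing any method for $I_{\varepsilon,p}^n$ with coordinate truncation yields a method for $I_{\varepsilon,p}$ of no larger error, whence $\mathcal E(W^T_q,I_{\varepsilon,p})\le\mathcal E(W^T_q,I_{\varepsilon,p}^n)$ for every $n$. Taking $n\ge m+1$ gives $\varepsilon>c_{m+1}\ge c_n$, so part~2 of Theorem~\ref{thm_4} applies and the right-hand side is exactly the claimed expression, which depends on $\varepsilon$ only through $m,\lambda$ and not on $n$; since $\Phi^*_{m,\lambda}$ uses only the first $m\le n$ coordinates, the composed method is $\Phi^*_{m,\lambda}$ itself, so the bound is attained by $\Phi^*_{m,\lambda}$. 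Equivalently one may just re-run the estimate of Theorem~\ref{thm_4} verbatim, the only difference being that $\sum_{k=1}^m|x_k-a_k|^p\le\|x-a\|_p^p\le\varepsilon^p$ now uses the honest $\ell_p$-norm while the infinite tail is still absorbed by $t_k^q\le t_{m,\lambda}^q$. When $\varepsilon>c_1=t_1$ one takes $\Phi_0^*$ and uses $\|Th\|_q\le t_1\|h\|_q\le t_1$.

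For the \emph{lower bound} the crucial remark is that the extremal elements of Theorem~\ref{thm_4} are finitely supported, so they remain admissible when the error is measured in all of $\ell_p$. For $\varepsilon\in(0,c_1]$ take $h^*=(u_1,\dots,u_m,0,\dots)$ with $u_j=\varepsilon\,\delta_{j,m}^{1/p}(\lambda)\,t_j^{-1}\bigl(\sum_{i=1}^m\delta_{i,m}(\lambda)\bigr)^{-1/p}$ as in Theorem~\ref{thm_4}; condition~\eqref{lambda_condition1} gives $\|h^*\|_q=1$ and $\sum_j(t_ju_j)^p=\varepsilon^p$, and finiteness of the support makes this $\|Th^*\|_p=\varepsilon$, i.e. $\theta\in I_{\varepsilon,p}(\pm Th^*)$. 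Since $\mathrm{id}$ is odd and $W^T_q$ symmetric, Corollary~\ref{cor_identity_operator} gives $\mathcal E(W^T_q,I_{\varepsilon,p})\ge\|Th^*\|_q$, and the identity $\delta_{j,m}^{q/p}-\delta_{j,m}=\delta_{j,m}^{q/p}t_{m,\lambda}^q/t_j^q$ used in Theorem~\ref{thm_4} converts $\|Th^*\|_q^q$ into $t_{m,\lambda}^q+\varepsilon^q\bigl(\sum_{j=1}^m\delta_{j,m}(\lambda)\bigr)^{(p-q)/p}$, matching the upper bound. For $\varepsilon>c_1$ take $h^*=(1,0,\dots)$; then $\|Th^*\|_p=t_1=c_1<\varepsilon$, so $\theta\in I_{\varepsilon,p}(\pm Th^*)$ and Corollary~\ref{cor_identity_operator} gives $\mathcal E(W^T_q,I_{\varepsilon,p})\ge t_1$.

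The genuine content, as opposed to transcription, is the statement $c_n\to0$, which I am allowed to assume since it is recorded before the theorem: this is what guarantees a valid pair $(m,\lambda)$ for every $\varepsilon>0$ and so reduces the three cases of Theorem~\ref{thm_4} to two. Should one wish to prove it afresh, the key inequality is $c_n\le t_1\bigl(\sum_{j=1}^n\delta_{j,n}\bigr)^{1/p-1/q}$ (from $t_j\le t_1$ and $0\le\delta_{j,n}\le1$, so $\delta_{j,n}^{q/p}\ge\delta_{j,n}$), after which $c_n\to0$ follows because $1/p-1/q<0$ and $\sum_{j=1}^n\delta_{j,n}\to\infty$. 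Beyond this I do not expect any obstacle: the infinite tail never causes trouble in the upper estimate, where it is always majorised by $t_{m,\lambda}^q\sum_k|h_k|^q$, and the lower estimate is unchanged because its extremisers have finite support.
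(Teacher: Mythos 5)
Your argument is correct and is essentially the paper's own route: the paper asserts Theorem~\ref{thm7} only as a ``limiting case'' of Theorem~\ref{thm_4}, and your reduction --- monotonicity of information under truncation ($z\mapsto(z_1,\dots,z_n)$ maps $I_{\varepsilon,p}(x)$ into $I^n_{\varepsilon,p}(x)$) for the upper bound, together with reuse of the finitely supported extremal sequences, which remain admissible for the full $\ell_p$-ball, and Corollary~\ref{cor_identity_operator} for the lower bound --- is exactly how that claim is made precise, with the same $(m,\lambda)$ and the same value, independent of $n\ge m+1$. The only external ingredient, $c_n\to 0$, is precisely what the paper records without proof just before the statement, so relying on it is consistent with the paper's level of detail (note only that your sketched verification needs $t_n\to 0$ to guarantee $\sum_{j=1}^n\delta_{j,n}\to\infty$, an implicit assumption shared with the paper).
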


\section{Recovery of scalar products}

    Following~\cite{Bab_87} (see also~\cite{Bab_88, BabGunRud_12, BabGunPar_20}), let us consider the problem of optimal recovery of scalar product. Let $1\le p,q\le \infty$ and given operators $T: \ell_p\to \ell_p$ and $S: \ell_q\to \ell_q$ be defined as follows: for fixed non-increasing sequences $t=\{t_k\}_{k=1}^{\infty}$ and $s=\{s_k\}_{k=1}^{\infty}$,
    \[
        Th:=\{t_kh_k\}_{k=1}^{\infty}, \quad h\in \ell_p,\qquad\text{and}\qquad Sg:=\{s_kg_k\}_{k=1}^{\infty}, \quad g\in \ell_q.
    \]
    Consider classes of sequences
    \begin{gather*}
        W_p^T:=\left\{x=Th\,:\,h\in \ell_p,\, \|h\|_p\le 1\right\},\\
        W_q^S:=\left\{y=Tg\,:\, g\in \ell_q,\, \|g\|_q\le 1\right\}.
    \end{gather*}
    and define the scalar product $A = \left<\cdot,\cdot\right>:\ell_p\times\ell_q\to\mathbb{C}$ as usually: 
    \[
        \left<x,y\right>=\sum\limits_{k=1}^{\infty}x_ky_k,\qquad x\in \ell_p,\; y\in \ell_q.
    \]
    For brevity, we denote $\left<x,y\right>_n := \sum_{k=1}^n x_ky_k$.

    In this section we will consider the problem of optimal recovery of scalar product operator $A$ on the class $W_{p,q}^{T,S} := W_p^T\times W_q^S$, when information mapping $I$ is given in one of the following forms:
    \begin{enumerate}
        \item $I(x,y) = J_{\bar{\varepsilon}}^n(x, y)= \left\{(a, b)\in\mathbb{C}^n\times\mathbb{C}^n\,:\, \forall k=1,...,n \Rightarrow |x_ky_k - a_kb_k|\le\varepsilon_k\right\}$, where $n\in\mathbb{N}$ and $\varepsilon_1,\ldots,\varepsilon_n \ge 0$;
        \item $I(x,y) = J^n_{\varepsilon, r}(x, y) = \left\{(a,b)\in\mathbb{C}^n\times\mathbb{C}^n\,:\,\left\|\left<x,y\right>_n - \left<a,b\right>_n\right\|_{\ell_r^n}\le \varepsilon\right\}$, where $n\in\mathbb{N}$ and $1\le r\le \infty$.
    \end{enumerate}
    
    Finally, for $m\in\mathbb{N}$, we define methods of recovery $\Psi_m^*:\mathbb{C}^n\times\mathbb{C}^n\to \mathbb{C}$: 
    \[
        \Psi_m^*(a, b) = \sum_{k=1}^{m} a_kb_k\bigg(1-\frac{t_{m+1}s_{m+1}}{t_{k}s_k}\bigg),\qquad a,b\in\mathbb{C}^n,
    \]
    that will be optimal in many situations and set $\Psi_0^*(a,b) := 0$.

\subsection{Information mapping $J^n_{\bar{\varepsilon}}$}


\begin{theorem}
\label{th5.1}
    {\sl 
    Let $n\in\mathbb{N}$, $1< p < \infty$, $q=p/(p-1)$, $\varepsilon_1,\ldots,\varepsilon_n \ge 0$ and $\bar{\varepsilon} = \left(\varepsilon_1,\ldots,\varepsilon_n\right)$. If 
    \[
        1 - \sum\limits_{k=1}^n\frac{\varepsilon_k}{t_ks_k} \ge 0
    \]
    we set $m=n$. Otherwise we choose $m\in\mathbb{Z}_+$, $m\le n$, to be such that
    \[
        1-\sum\limits_{k=1}^m\frac{\varepsilon_k}{t_ks_k}\ge 0\qquad \textrm{and} \qquad 1-\sum\limits_{k=1}^m\frac{\varepsilon_k}{t_ks_k}-\frac{\varepsilon_{m+1}}{t_{m+1}s_{m+1}} < 0.
    \]
    Then
    \[
        \mathcal{E}\left(A,W_{p,q}^{T,S}, J^n_{\bar{\varepsilon}}\right)= \mathcal{E}\left(A, W_{p,q}^{T,S},J^n_{\bar\varepsilon},\Psi_m^*\right) =  t_{m+1}s_{m+1}+\sum\limits_{k=1}^m\varepsilon_k\bigg(1-\frac{t_{m+1}s_{m+1}}{t_{k}s_k}\bigg).
    \]}
\end{theorem}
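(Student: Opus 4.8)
The plan is to mirror the proof of Theorem~\ref{thm_1}, making two substitutions: the $\ell_q$-triangle (convexity) inequality is replaced by Hölder's inequality $\sum_k|h_kg_k|\le\|h\|_p\|g\|_q$, and the elementary Lemma~\ref{lem_lower_estimate} is replaced by Corollary~\ref{cor_scalar_product} on scalar products.

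\emph{Upper estimate.} Fix $x=Th\in W^T_p$ and $y=Sg\in W^S_q$ with $\|h\|_p\le1$, $\|g\|_q\le1$, and let $(a,b)\in J^n_{\bar\varepsilon}(x,y)$, so that $|x_ky_k-a_kb_k|\le\varepsilon_k$ for $k=1,\ldots,n$. Setting $\beta_k:=\frac{t_{m+1}s_{m+1}}{t_ks_k}$ and using the identity $x_ky_k-a_kb_k(1-\beta_k)=(1-\beta_k)(x_ky_k-a_kb_k)+\beta_kx_ky_k$, the inequalities $0\le\beta_k\le1$ for $k\le m$ (monotonicity of $t$ and $s$), the relation $\beta_kx_ky_k=t_{m+1}s_{m+1}h_kg_k$, and the bound $|x_ky_k|=t_ks_k|h_kg_k|\le t_{m+1}s_{m+1}|h_kg_k|$ for $k\ge m+1$, I would obtain
\[
    \left|\langle x,y\rangle-\Psi_m^*(a,b)\right|\le\sum_{k=1}^m(1-\beta_k)\varepsilon_k+t_{m+1}s_{m+1}\sum_{k=1}^\infty|h_kg_k|,
\]
and Hölder's inequality bounds the last sum by $\|h\|_p\|g\|_q\le1$, yielding the claimed quantity as an upper bound for $\mathcal{E}(A,W^{T,S}_{p,q},J^n_{\bar\varepsilon},\Psi_m^*)$.

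\emph{Lower estimate.} I would put $v_k:=\varepsilon_k/(t_ks_k)$ for $k=1,\ldots,m$ and $v_{m+1}:=1-\sum_{k=1}^m\varepsilon_k/(t_ks_k)$, which is nonnegative by the choice of $m$, and set $h^*:=(v_1^{1/p},\ldots,v_{m+1}^{1/p},0,\ldots)$, $g^*:=(v_1^{1/q},\ldots,v_{m+1}^{1/q},0,\ldots)$. Since $\frac1p+\frac1q=1$, one has $h^*_kg^*_k=v_k$ and $\|h^*\|_p^p=\|g^*\|_q^q=\sum_kv_k\le1$, so $\tilde x_1:=Th^*\in W^T_p$, $-\tilde x_1\in W^T_p$, and $\tilde x_2:=Sg^*\in W^S_q$. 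The product of the $k$-th coordinates, $(\tilde x_1)_k(\tilde x_2)_k=t_ks_kv_k$, equals $\varepsilon_k$ for $k\le m$, is strictly less than $\varepsilon_{m+1}$ for $k=m+1$ by the second defining inequality for $m$, and vanishes for $k>m+1$; hence the null element of $\mathbb{C}^n\times\mathbb{C}^n$ belongs to $J^n_{\bar\varepsilon}(\tilde x_1,\tilde x_2)\cap J^n_{\bar\varepsilon}(-\tilde x_1,\tilde x_2)$. Corollary~\ref{cor_scalar_product} then gives $\mathcal{E}(A,W^{T,S}_{p,q},J^n_{\bar\varepsilon})\ge|\langle\tilde x_2,\tilde x_1\rangle|=\sum_{k=1}^{m+1}t_ks_kv_k$, and a direct computation shows that this equals $t_{m+1}s_{m+1}+\sum_{k=1}^m\varepsilon_k(1-\frac{t_{m+1}s_{m+1}}{t_ks_k})$, matching the upper bound. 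The degenerate cases $m=0$ and $m=n$ are handled the same way: for $m=0$ take $h^*=g^*=(1,0,\ldots)$, and for $m=n$ there is simply no coordinate-$(n+1)$ constraint to verify.

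\emph{Main obstacle.} The only step going beyond the scheme of Theorem~\ref{thm_1} is the construction of the extremal pair: one must exhibit two sequences with prescribed coordinatewise products whose $\ell_p$- and $\ell_q$-norms are simultaneously at most $1$. The choice $h^*_k=v_k^{1/p}$, $g^*_k=v_k^{1/q}$ achieves exactly this, precisely because $\frac1p+\frac1q=1$ forces both $h^*_kg^*_k=v_k$ and $\sum(h^*_k)^p=\sum(g^*_k)^q=\sum v_k$; this is where the hypothesis $1<p<\infty$ with $q$ conjugate is genuinely used. Everything else is bookkeeping: verifying the norm and membership constraints and simplifying the final sum.
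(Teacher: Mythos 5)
Your proposal is correct and follows essentially the same route as the paper: the identical upper-bound decomposition $(1-\beta_k)(x_ky_k-a_kb_k)+\beta_kx_ky_k$ followed by Hölder's inequality, and the same extremal pair $h^*_k=(\varepsilon_k/(t_ks_k))^{1/p}$, $g^*_k=(\varepsilon_k/(t_ks_k))^{1/q}$ fed into Corollary~\ref{cor_scalar_product}. Your verification that the coordinate-$(m+1)$ product falls strictly below $\varepsilon_{m+1}$ via the second defining inequality for $m$ is exactly the paper's (implicit) justification that $\theta$ lies in the information sets.
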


\begin{proof}[Proof] Using the triangle inequality, relations $\left|x_ky_k - a_kb_k\right|\le \varepsilon_k$, $k=1,\ldots,n$, and monotony of sequences $t$ and $s$, we obtain that, for $(x, y) = (Th, Sg)\in W_{p,q}^{T,S}$ and $(a, b) \in J_{\bar{\varepsilon}}^n(x, y)$, 
    \begin{gather*}
        \left|\left<x,y\right>-\Psi_m^*(a, b)\right|= \left|\sum_{k=1}^\infty x_k y_k - \sum_{k=1}^m a_kb_k\left(1-\frac{t_{m+1}s_{m+1}}{t_k s_k}\right)\right| \\
        \le \sum_{k=1}^m \left(1-\frac{t_{m+1}s_{m+1}}{t_ks_k}\right)|x_k y_k - a_kb_k|+\sum_{k=1}^m \frac{t_{m+1}s_{m+1}}{t_ks_k}|x_k y_k|+\sum_{k=m+1}^\infty |x_k y_k|\\
        \le\sum_{k=1}^m \left(1-\frac{t_{m+1}s_{m+1}}{t_ks_k}\right)\varepsilon_k+t_{m+1}s_{m+1}\sum_{k=1}^m |h_k g_k|+\sum_{k=m+1}^\infty t_ks_k|h_k g_k|\\
        \le\sum_{k=1}^m \left(1-\frac{t_{m+1}s_{m+1}}{t_ks_k}\right)\varepsilon_k+t_{m+1}s_{m+1}\sum_{k=1}^\infty |h_k g_k|\\
        \le t_{m+1}s_{m+1}+\sum_{k=1}^m \left(1-\frac{t_{m+1}s_{m+1}}{t_ks_k}\right)\varepsilon_k,
    \end{gather*}
    which proves the upper estimate.

    To establish the lower estimate, we set
    \begin{gather}
        u_k := \left(\frac{\varepsilon_k}{t_ks_k}\right)^{1/p},\qquad v_k := \left(\frac {\varepsilon_k}{t_ks_k}\right)^{1/q},\qquad k=1,\ldots,m,\\
        u_{m+1}= \left(1 - \sum\limits_{k=1}^m\frac{\varepsilon_k}{t_ks_k}\right)^{1/p},\qquad v_{m+1}=\left(1-\sum\limits_{k=1}^m\frac {\varepsilon_k}{t_ks_k}\right)^{1/q},
    \end{gather}
    and consider $u^*=\left(u_1,\ldots,u_{m+1},0,\ldots\right)$ and $v^*=\left(v_1,\ldots,v_{m+1},0,\ldots\right)$. It is clear that $\left(Tu^*, Sv^*\right)\in W_{p,q}^{T,S}$ and $\theta\in J_{\bar{\varepsilon}}^n(Tu^*, Sv^*)\cap J_{\bar{\varepsilon}}^n(-Tu^*, Sv^*)$ due to the choice of number $m$. Hence, by Corollary~\ref{cor_scalar_product}, 
    \begin{gather*}
        E\left(A, W_{p,q}^{T,S}, J_{\bar{\varepsilon}}^n\right) \ge \left|\left<Tu^*, Sv^*\right>\right|= \sum_{k=1}^m t_ks_ku_kv_k+t_{m+1}s_{m+1}\left(1-\sum_{k=1}^m\frac{\varepsilon_k}{t_ks_k}\right)\\
        =t_{m+1}s_{m+1}+\sum_{k=1}^m\left(1-\frac{t_{m+1}s_{m+1}}{t_ks_k}\right)\varepsilon_k.
    \end{gather*}
    This proves the sharpness of the upper estimate. Theorem is proved. \qedhere
\end{proof}

\subsection{Information mapping $J^n_{\varepsilon,r}$}

We consider three cases separately: $r=\infty$, $0 < r\le 1$ and $1<r<\infty$.

\subsubsection{Case $r = \infty$}

Setting $\varepsilon_1 = \ldots = \varepsilon_n = \varepsilon$, we obtain the following corollary from Theorem~\ref{th5.1}.

\begin{theorem}
    {\sl Let $n\in\mathbb{N}$, $1< p< \infty$, $q = p/(p-1)$ and $\varepsilon\ge 0$. If 
    \[
        1 - \varepsilon\sum\limits_{k=1}^n\frac{1}{t_ks_k}\ge 0,
    \]
    we set $n = m$. Otherwise we choose $m\in\mathbb{Z}_+$, $m\le n$, to be such that
    \[
        1-\varepsilon\sum\limits_{k=1}^m\frac{1}{t_ks_k}\ge 0\qquad \textrm{and} \qquad 1-\varepsilon\sum\limits_{k=1}^{m+1}\frac{1}{t_ks_k}< 0.
    \]
    Then
    \[
        \mathcal{E}\left(A,W_{p,q}^{T,S},J^n_{\varepsilon,\infty}\right) = \mathcal{E}\left(A,W_{p,q}^{T,S},J^n_{\varepsilon,\infty},\Psi^*_m\right) = t_{m+1}s_{m+1}+\varepsilon\sum_{k=1}^m\left(1-\frac{t_{m+1}s_{m+1}}{t_ks_k}\right).
    \]}
\end{theorem}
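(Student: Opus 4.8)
The plan is to recognize that for $r=\infty$ the information mapping $J^n_{\varepsilon,\infty}$ is nothing but the mapping $J^n_{\bar\varepsilon}$ of the preceding subsection with all the errors equal to $\varepsilon$, and then to invoke Theorem~\ref{th5.1}. Concretely, since $\|\cdot\|_{\ell^n_r}$ with $r=\infty$ is the maximum norm, the constraint $\left\|\left<x,y\right>_n - \left<a,b\right>_n\right\|_{\ell^n_\infty}\le\varepsilon$ is equivalent to $|x_ky_k-a_kb_k|\le\varepsilon$ for every $k=1,\ldots,n$. Hence $J^n_{\varepsilon,\infty}(x,y)=J^n_{\bar\varepsilon}(x,y)$ with $\bar\varepsilon=(\varepsilon,\ldots,\varepsilon)$, and therefore
\[
    \mathcal{E}\left(A,W_{p,q}^{T,S},J^n_{\varepsilon,\infty}\right)=\mathcal{E}\left(A,W_{p,q}^{T,S},J^n_{\bar\varepsilon}\right),\qquad \mathcal{E}\left(A,W_{p,q}^{T,S},J^n_{\varepsilon,\infty},\Psi\right)=\mathcal{E}\left(A,W_{p,q}^{T,S},J^n_{\bar\varepsilon},\Psi\right)
\]
for every method $\Psi$; in particular this holds for $\Psi=\Psi_m^*$, which does not depend on the $\varepsilon_k$'s.

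Next I would specialize the statement of Theorem~\ref{th5.1} to $\varepsilon_1=\ldots=\varepsilon_n=\varepsilon$. Under this substitution the degenerate condition $1-\sum_{k=1}^n\frac{\varepsilon_k}{t_ks_k}\ge 0$ becomes $1-\varepsilon\sum_{k=1}^n\frac{1}{t_ks_k}\ge 0$, and the defining inequalities for $m$ become $1-\varepsilon\sum_{k=1}^m\frac{1}{t_ks_k}\ge 0$ and $1-\varepsilon\sum_{k=1}^{m+1}\frac{1}{t_ks_k}<0$, which is exactly the choice of $m$ in the statement. Finally, the optimal value
\[
    t_{m+1}s_{m+1}+\sum_{k=1}^m\varepsilon_k\left(1-\frac{t_{m+1}s_{m+1}}{t_ks_k}\right)
\]
delivered by Theorem~\ref{th5.1} turns into $t_{m+1}s_{m+1}+\varepsilon\sum_{k=1}^m\left(1-\frac{t_{m+1}s_{m+1}}{t_ks_k}\right)$, which is the claimed expression.

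There is \emph{no} real obstacle here: the only points requiring care are the one-line identification of the $\ell^n_\infty$-ball with the product of segments $B[\varepsilon]\times\ldots\times B[\varepsilon]$ (so that the two information mappings coincide) and the verification that the index conditions and the value formula of Theorem~\ref{th5.1} reduce correctly under $\varepsilon_k\equiv\varepsilon$. Both are immediate, so the theorem follows as a direct corollary of Theorem~\ref{th5.1}, with $\Psi_m^*$ remaining optimal; indeed, as the surrounding text already announces, one could simply reference Theorem~\ref{th5.1} rather than write out a separate argument.
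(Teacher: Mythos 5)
Your proposal is correct and coincides with the paper's own treatment: the paper obtains this theorem precisely by setting $\varepsilon_1=\ldots=\varepsilon_n=\varepsilon$ in Theorem~\ref{th5.1}, exactly as you do. Your added one-line check that $J^n_{\varepsilon,\infty}=J^n_{\bar\varepsilon}$ with $\bar\varepsilon=(\varepsilon,\ldots,\varepsilon)$ is the intended (and correct) reading of the $\ell^n_\infty$ constraint, so nothing is missing.
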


\subsubsection{Case $0 < r \le 1$}

\begin{theorem}
    {\sl Let $n\in\mathbb{N}$, $1< p< \infty$, $q = p/(p-1)$ and $r\in (0,1]$. If $\varepsilon \le t_1s_1$ then
    \[
        \mathcal{E}\left(A, W_{p,q}^{T,S},J^n_{\varepsilon,r}\right) = \mathcal{E}\left(A, W_{p,q}^{T,S},J^n_{\varepsilon,r},\Psi^*_n\right) = t_{n+1}s_{n+1}+\varepsilon\left(1-\frac{t_{n+1}s_{n+1}}{t_1s_1}\right),
    \]
    and if $\varepsilon > t_1s_1$ then $\mathcal{E}\left(A,W_{p,q}^{T,S},J^n_{\varepsilon,r}\right) = \mathcal{E}\left(A,W_{p,q}^{T,S},J^n_{\varepsilon,r},\Psi^*_0\right) = t_{1}s_{1}$.}
\end{theorem}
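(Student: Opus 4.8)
The plan is to mirror the proofs of Theorem~\ref{thm_3} and Theorem~\ref{th5.1}; the only new ingredient is the elementary fact that for $0<r\le 1$ the $\ell_r^n$-quasinorm dominates the $\ell_1^n$-norm, $\sum_{k=1}^n|w_k|\le\|w\|_{\ell_r^n}$ for $w\in\mathbb{C}^n$ (subadditivity of $t\mapsto t^r$ on $[0,\infty)$). Consequently, $(a,b)\in J^n_{\varepsilon,r}(x,y)$ forces not only $|x_ky_k-a_kb_k|\le\varepsilon$ for each $k\le n$ but the stronger bound $\sum_{k=1}^n|x_ky_k-a_kb_k|\le\varepsilon$, and this is exactly what makes $\Psi^*_n$ (resp. $\Psi^*_0$) optimal.

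For the upper estimate when $\varepsilon\le t_1s_1$ I would repeat the computation from Theorem~\ref{th5.1} essentially verbatim with $m=n$: for $k\le n$ split $x_ky_k$ into $\big(1-\tfrac{t_{n+1}s_{n+1}}{t_ks_k}\big)(x_ky_k-a_kb_k)$, $\big(1-\tfrac{t_{n+1}s_{n+1}}{t_ks_k}\big)a_kb_k$ and $\tfrac{t_{n+1}s_{n+1}}{t_ks_k}x_ky_k$, use monotonicity of $t,s$ to absorb the tail $\sum_{k>n}|x_ky_k|$ into $t_{n+1}s_{n+1}\sum_{k>n}|h_kg_k|$, and apply Hölder to get $t_{n+1}s_{n+1}\sum_k|h_kg_k|\le t_{n+1}s_{n+1}$. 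The only new point is that the remaining term $\sum_{k=1}^n\big(1-\tfrac{t_{n+1}s_{n+1}}{t_ks_k}\big)|x_ky_k-a_kb_k|$, whose coefficients are nonnegative and non-increasing in $k$, is bounded — via the quasinorm inequality above — by $\big(1-\tfrac{t_{n+1}s_{n+1}}{t_1s_1}\big)\varepsilon$. For $\varepsilon>t_1s_1$ the upper bound is the trivial one delivered by $\Psi^*_0$: $|\langle x,y\rangle|\le\sum_k t_ks_k|h_kg_k|\le t_1s_1\|h\|_p\|g\|_q\le t_1s_1$.

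For the matching lower estimates I would apply Corollary~\ref{cor_scalar_product} with extremal elements concentrated on a single coordinate among the first $n$. When $\varepsilon\le t_1s_1$, take $u^*$ with $u_1=(\varepsilon/(t_1s_1))^{1/p}$, $u_{n+1}=(1-\varepsilon/(t_1s_1))^{1/p}$ and $v^*$ with $v_1=(\varepsilon/(t_1s_1))^{1/q}$, $v_{n+1}=(1-\varepsilon/(t_1s_1))^{1/q}$, all other entries zero; then $\|u^*\|_p=\|v^*\|_q=1$ (so $\pm Tu^*\in W^T_p$ and $Sv^*\in W^S_q$), the first $n$ coordinate-wise products of $\pm Tu^*$ with $Sv^*$ form the vector $(\pm\varepsilon,0,\dots,0)$ of $\ell_r^n$-quasinorm $\varepsilon$, hence $\theta\in J^n_{\varepsilon,r}(Tu^*,Sv^*)\cap J^n_{\varepsilon,r}(-Tu^*,Sv^*)$, and $|\langle Tu^*,Sv^*\rangle|=\varepsilon+t_{n+1}s_{n+1}(1-\varepsilon/(t_1s_1))$ since $\tfrac1p+\tfrac1q=1$, matching the upper bound. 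When $\varepsilon>t_1s_1$, take $u^*=v^*=(1,0,\dots)$; the analogous vector $(\pm t_1s_1,0,\dots,0)$ still lies in the $\ell_r^n$-ball of radius $\varepsilon$, so Corollary~\ref{cor_scalar_product} gives $\mathcal{E}\ge t_1s_1$.

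I do not expect a serious obstacle: the proof is routine once one notes the inclusion of the $\ell_r^n$-ball in the $\ell_1^n$-ball of the same radius for $r\le 1$. The only points requiring a line of care are the verification that the $\ell_r^n$-quasinorm of a vector supported on one coordinate equals that coordinate's modulus (so that $\theta$ genuinely lies in the information set of the extremal pair), and the observation that $\varepsilon\le t_1s_1$ is precisely the threshold making $u_{n+1},v_{n+1}$ — hence the first-regime extremal pair — well-defined, while for larger $\varepsilon$ one must pass to the degenerate method $\Psi^*_0$.
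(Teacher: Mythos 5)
Your proposal is correct and follows essentially the same route as the paper's proof: the same $m=n$ decomposition from Theorem~\ref{th5.1}, the same bound $\sum_{k=1}^n\bigl(1-\tfrac{t_{n+1}s_{n+1}}{t_ks_k}\bigr)|x_ky_k-a_kb_k|\le\bigl(1-\tfrac{t_{n+1}s_{n+1}}{t_1s_1}\bigr)\varepsilon$ via the inclusion of the $\ell_r^n$-ball in the $\ell_1^n$-ball for $r\le 1$, and the same two-coordinate (respectively one-coordinate) extremal pairs fed into Corollary~\ref{cor_scalar_product} for the lower bounds. No gaps.
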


\begin{proof}[Proof]
    First, we consider the case $\varepsilon \le t_1s_1$. Let $(x,y) = (Th,Sg)\in W^{T,S}_{p,q}$ and $(a,b)\in J^n_{\varepsilon,r}(x,y)$. Similarly to the proof of Theorem~\ref{th5.1} in the case $m=n$ we obtain
    \begin{equation}
    \label{ocenka ukloneniia}
        \left|\left<x,y\right>-\Psi^*_n (a,b)\right| \le\sum_{k=1}^n\left(1-\frac{t_{n+1}s_{n+1}}{t_ks_k}\right)|x_ky_k-a_kb_k|+t_{n+1}s_{n+1}\sum_{k=1}^\infty  h_kg_k.
    \end{equation}
    Using the H\"older inequality and inequality $\varepsilon_1^{1/r} + \ldots + \varepsilon_n^{1/r}\le \left(\varepsilon_1+\ldots+\varepsilon_n\right)^{1/r}$, we have
    \begin{gather*}
        \displaystyle \left|\left<x,y\right>-\Psi^*_n (a,b)\right| \le \displaystyle \max_{k=\overline{1,n}}\left(1-\frac{t_{n+1}s_{n+1}}{t_ks_k}\right)\sum_{k=1}^n|x_ky_k-a_kb_k| + t_{n+1}s_{n+1}\|h\|_p \|g\|_q \\ 
        \le \displaystyle  \left(1-\frac{t_{n+1}s_{n+1}}{t_1s_1}\right)\varepsilon+ t_{n+1}s_{n+1}.
    \end{gather*}
    The upper estimate is proved.

    Now, we establish the lower estimate. Let 
    \[
        u_1=\left(\frac{\varepsilon}{s_1t_1}\right)^{1/p},\quad u_{n+1}=\left(1-\frac{\varepsilon}{t_1s_1}\right)^{1/p},\quad v_1=\left(\frac{\varepsilon}{s_1t_1}\right)^{1/q},\quad v_{n+1}=\left(1-\frac{\varepsilon}{t_1s_1}\right)^{1/q}, 
    \]
    and consider elements $u^* = \left(u_1,0,\ldots,0,u_{n+1},0,\ldots\right)$, $v^*=\left(u_1,0,\ldots,0,u_{n+1},0,\ldots\right)$. Obviously, $\left(Tu^*,Sv^*\right)\in W^{T,S}_{p,q}$ and $(\theta,\theta)\in J^n_{\varepsilon,r}(Tu^*,Sv^*)\cap J^n_{\varepsilon,r}(-Tu^*,Sv^*)$. Then by Corollary~\ref{cor_scalar_product},
    \begin{gather*}
        \mathcal{E}\left(A, W_{p,q}^{T,S},J_{\varepsilon,r}^n\right) \ge \left|\left<Tu^*, Sv^*\right>\right| = t_1s_1\cdot\frac{\varepsilon}{t_1s_1} + t_{n+1}s_{n+1}\cdot\left(1 - \frac{\varepsilon}{t_1s_1}\right) \\
        = \left(1 - \frac{t_{n+1}s_{n+1}}{t_1s_1}\right)\varepsilon + t_{n+1}s_{n+1},
    \end{gather*}
    which finishes the proof of the desired estimate.

    Next, we let $\varepsilon >t_1s_1$. For $(x,y)=(Th,Sg)\in W^{T,S}_{p,q}$, and $(a,b)\in J^n_{\varepsilon,r}(x,y)$, we have
    \[
        \left|\left<x,y\right>-\Psi^*_0(a,b)\right|=|\left<x,y\right>|\le \sum_{k=1}^\infty t_ks_k|h_kg_k|\le t_1s_1 \|h\|_p \|g\|_q\le t_1s_1.
    \]
    Taking $u^*=v^*=(1,0,...)$, it is clear that $(\theta,\theta)\in J^n_{\varepsilon,r}(Tu^*,Sv^*)\cap J^n_{\varepsilon,r}(-Tu^*,Sv^*)$. By Corollary~\ref{cor_scalar_product}, 
    \[
        \mathcal{E}\left(A,W_{p,q}^{T,S},J^n_{\varepsilon,r}\right)\ge \left|\left(Tu^*,Sv^*\right)\right|=t_1s_1.
    \]
    The theorem is proved. \qedhere
\end{proof}

\subsubsection{Case $1< r< \infty$}

First, we introduce some preliminary notations. For $m=1,\ldots,n$, we define 
\[
    \tau_{j,m} := \left(1 - \frac{t_{m+1}s_{m+1}}{t_js_j}\right)^{\frac{1}{r-1}}, \quad j=1,\ldots,m-1,
\]
and set $d_1:=t_1s_1$ and, for $m\ge 2$,
\[
    d_m := \left(\sum\limits_{j=1}^{m}\tau^r_{j,m}\right)^{1/r}\left(\sum\limits_{j=1}^{m}\frac{\tau_{j,m}}{t_js_j}\right)^{-1}.
\]
The sequence $\left\{d_m\right\}_{m=1}^n$ is non-increasing, which can be verified using the arguments similar to those applied to prove monotony of sequence $\left\{c_m\right\}_{m=1}^n$ in subsection~\ref{s321}. In addition, for convenience, for $\lambda\in[0,1]$, we denote 
\[
    t_{m,\lambda} := (1-\lambda)t_{m+1} + \lambda t_{m} \qquad\text{and}\qquad s_{m,\lambda} := (1-\lambda)s_{m+1} + \lambda s_{m}.
\]

\begin{theorem}
    {\sl Let $n\in\mathbb{N}$, $1< p <\infty$, $q=p/(p-1)$ and $1< r < \infty$. 
    \begin{enumerate}
        \item[1.] If $\varepsilon \le d_{n+1}$ then
        \[
            \mathcal{E}\left(A,W^{T,S}_{p,q},J^n_{\varepsilon,r}\right) = \mathcal{E}\left(A,W^{T,S}_{p,q},J^n_{\varepsilon,r},\Psi^*_{n}\right) = t_{n+1}s_{n+1} + \varepsilon\cdot \left(\sum\limits_{j=1}^n\left(1 - \frac{t_{n+1}s_{n+1}}{t_js_j}\right)^{\frac{r}{r-1}}\right)^{\frac{r-1}{r}}.
        \]
        \item[2.] If $\varepsilon\in\left(d_n,d_1\right]$ then there exist $m\in\{1,\ldots,n-1\}$ such that $\varepsilon\in \left(d_{m+1}, d_m\right]$ and $\lambda = \lambda(\varepsilon)\in[0,1)$ such that
        \begin{equation}
        \label{lambda_cond_2}
            \varepsilon = \left(\sum\limits_{j=1}^{m}\left(1 - \frac{t_{m,\lambda}s_{m,\lambda}}{t_js_j}\right)^{\frac{r}{r-1}}\right)^{1/r}\left(\sum\limits_{j=1}^{m}\frac{\left(1 - \frac{t_{m,\lambda}s_{m,\lambda}}{t_js_j}\right)^{\frac{1}{r-1}}}{t_js_j}\right)^{-1}.
        \end{equation}
        Then
        \[
            \mathcal{E}\left(A,W^{T,S}_{p,q},J_{\varepsilon,r}^n\right) = \mathcal{E}\left(A,W^{T,S}_{p,q},J_{\varepsilon,r}^n,\Psi^*_{m,\lambda}\right) = t_{m,\lambda}s_{m,\lambda} + \varepsilon \left(\sum\limits_{j=1}^m \left(1 - \frac{t_{m,\lambda}s_{m,\lambda}}{t_js_j}\right)^{\frac{r}{r-1}}\right)^{\frac{r-1}r},
    \]
    where
    \[
        \Psi^*_{m,\lambda}\left(a,b\right) = \sum\limits_{j=1}^m a_jb_j\left(1 - \frac{t_{m,\lambda}s_{m,\lambda}}{t_js_j}\right),\qquad a,b\in \ell_r.
    \]
    \item[3.] If $\varepsilon > d_1$ then $\mathcal{E}\left(A,W^{T,S}_{p,q},J_{\varepsilon,r}^n\right) = \mathcal{E}\left(A,W^{T,S}_{p,q},J_{\varepsilon,r}^n,\Psi^*_0\right) = t_1s_1$.
    \end{enumerate}}
\end{theorem}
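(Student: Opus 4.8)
The plan is to mirror the proof of Theorem~\ref{thm_4}, since the present statement is its exact analogue for scalar products. One passes from the case $1\le q<p<\infty$ of the sequence problem to the present one by replacing each weight $t_j^q$ by the product $t_js_j$, the exponent $p/(p-q)$ by $r/(r-1)$, and the norm restriction $\|h\|_q\le1$ by the bound $\sum_k|h_kg_k|\le\|h\|_p\|g\|_q\le1$ supplied by the H\"older inequality with conjugate exponents $p,q$. Under this dictionary $\tau_{j,m}$ and $d_m$ take over the roles of $\delta_{j,m}$ and $c_m$, and the qualitative structure of the argument transfers directly.

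First I would prove all three upper estimates at once. Fix $(x,y)=(Th,Sg)\in W^{T,S}_{p,q}$ and an admissible $(a,b)\in J^n_{\varepsilon,r}(x,y)$, so that $\bigl(\sum_{k=1}^n|x_ky_k-a_kb_k|^r\bigr)^{1/r}\le\varepsilon$. For the method $\Psi^*_{m,\lambda}$ I write each summand $x_ky_k-a_kb_k\bigl(1-\frac{t_{m,\lambda}s_{m,\lambda}}{t_ks_k}\bigr)$ as $\bigl(1-\frac{t_{m,\lambda}s_{m,\lambda}}{t_ks_k}\bigr)(x_ky_k-a_kb_k)+\frac{t_{m,\lambda}s_{m,\lambda}}{t_ks_k}x_ky_k$, apply the triangle inequality, and use $x_ky_k=t_ks_kh_kg_k$ together with $t_ks_k\le t_{m,\lambda}s_{m,\lambda}$ for $k>m$ (from monotonicity of $t,s$ and $t_{m,\lambda}\ge t_{m+1}$) to collect both the multiplier terms with $k\le m$ and the entire tail $k>m$ into $t_{m,\lambda}s_{m,\lambda}\sum_{k\ge1}|h_kg_k|\le t_{m,\lambda}s_{m,\lambda}$. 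The residual sum $\sum_{k=1}^m\bigl(1-\frac{t_{m,\lambda}s_{m,\lambda}}{t_ks_k}\bigr)|x_ky_k-a_kb_k|$ I bound by the H\"older inequality with exponents $r/(r-1)$ and $r$, which yields exactly the declared value; items~1 and~3 are the specialisations $(m,\lambda)=(n,0)$ and $\Psi^*_0$ (for the latter simply $|\langle x,y\rangle|\le t_1s_1\sum_k|h_kg_k|\le t_1s_1$).

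For the matching lower bounds I would use Corollary~\ref{cor_scalar_product}, prescribing the coordinatewise products $\beta_k:=u_kv_k$ of the extremisers and recovering $u^*,v^*$ by the symmetric split $u_k=\beta_k^{1/p}$, $v_k=\beta_k^{1/q}$; then $\|u^*\|_p^p=\|v^*\|_q^q=\sum_k\beta_k$ and $\langle Sv^*,Tu^*\rangle=\sum_k t_ks_k\beta_k$, and $(t_ks_k\beta_k)_k$ is precisely what $J^n_{\varepsilon,r}$ measures. In item~1 (small $\varepsilon$, i.e. $\varepsilon\le d_n$) I take $\beta_k=\varepsilon\,\tau_{k,n}\,(t_ks_k)^{-1}\bigl(\sum_{j=1}^n\tau_{j,n}^r\bigr)^{-1/r}$ for $k\le n$ and put the leftover mass $\beta_{n+1}=1-\sum_{k\le n}\beta_k$ at the index $n+1$ invisible to the information; the normalisation defining $d_n$ gives $\sum_{k\le n}\beta_k=\varepsilon/d_n\le1$, so $\beta_{n+1}\ge0$, while $\sum_{k=1}^n(t_ks_k)^r\beta_k^r=\varepsilon^r$ guarantees $\theta\in J^n_{\varepsilon,r}(\pm Tu^*,Sv^*)$. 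Using $\tau_{k,n}^{r-1}=1-\frac{t_{n+1}s_{n+1}}{t_ks_k}$, the value $\langle Sv^*,Tu^*\rangle$ collapses to $t_{n+1}s_{n+1}+\varepsilon\bigl(\sum_{j=1}^n\tau_{j,n}^r\bigr)^{(r-1)/r}$, matching the upper estimate; item~3 is immediate with $u^*=v^*=(1,0,\ldots)$.

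The heart of the argument is item~2, where $\varepsilon>d_n$ forbids any nonnegative tail mass, so the whole mass must sit on $\{1,\ldots,m\}$ and the threshold $t_{m+1}s_{m+1}$ must be raised continuously to the interpolated level $t_{m,\lambda}s_{m,\lambda}$. I would first record that $\{d_m\}$ is non-increasing (the same Cauchy--Schwarz computation used for $\{c_m\}$ in subsection~\ref{s321}) and that the right-hand side of~\eqref{lambda_cond_2}, viewed as a function $\varphi_m(\lambda)$, is continuous and monotone in $\lambda$, with endpoint values the two consecutive terms of the sequence, so that the corresponding subinterval $(d_{m+1},d_m]$ is swept out and the intermediate value theorem produces a unique $\lambda\in[0,1)$ solving~\eqref{lambda_cond_2}. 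Taking $\beta_k=\varepsilon\,\tau_{k,m}(\lambda)\,(t_ks_k)^{-1}\bigl(\sum_{j=1}^m\tau_{j,m}^r(\lambda)\bigr)^{-1/r}$ on $k\le m$, the delicate point is that a single condition, namely~\eqref{lambda_cond_2} read as $\varepsilon=\varphi_m(\lambda)$, simultaneously forces the norm normalisation $\sum_{k\le m}\beta_k=1$ (whence $\|u^*\|_p=\|v^*\|_q=1$) and the information saturation $\sum_{k=1}^n(t_ks_k)^r\beta_k^r=\varepsilon^r$ (whence $\theta\in J^n_{\varepsilon,r}(\pm Tu^*,Sv^*)$); reconciling these two demands is exactly where~\eqref{lambda_cond_2} is used. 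The same algebraic collapse as in item~1, now with threshold $t_{m,\lambda}s_{m,\lambda}$, then gives the asserted error, and I expect the careful bookkeeping of the indices and of the monotonicity endpoints to be the only genuinely technical step.
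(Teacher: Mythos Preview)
Your proposal is correct and follows essentially the same route as the paper: the upper bound via the triangle inequality plus H\"older with exponents $r$ and $r/(r-1)$, and the lower bounds via Corollary~\ref{cor_scalar_product} with extremisers built from the weights $\tau_{j,m}$ (or $\tau_{j,m}(\lambda)$) and split as $u_k=\beta_k^{1/p}$, $v_k=\beta_k^{1/q}$. You are a bit more explicit than the paper about the continuity/monotonicity argument yielding $\lambda$ in item~2, but otherwise the arguments coincide.
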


\begin{proof}[Proof]
    Let $m\in\{0,\ldots,n\}$, $\lambda\in[0,1]$ and $\Psi$ be either $\Psi_n^*$ or $\Psi_0^*$, or $\Psi_{m,\lambda}^*$. Using the H\"older inequality with parameters $r$ and $\frac{r}{r-1}$, for $(x,y) = (Th,Sg)\in W^{T,S}_{p,q}$ and $\left(a,b\right)\in J_{\varepsilon,r}^n(x,y)$, we have
    \begin{gather*}
        \displaystyle \left|\left<x,y\right> - \Psi\left(a,b\right)\right| \le \displaystyle \sum\limits_{j=1}^m \left(1 - \frac{t_{m,\lambda}s_{m,\lambda}}{t_js_j}\right) \left|x_jy_j - a_jb_j\right| + t_{m,\lambda}s_{m,\lambda}\sum\limits_{j=1}^m \frac{x_jy_j}{t_js_j} + \sum\limits_{j=m+1}^\infty x_jy_j\\ 
        \le \displaystyle t_{m,\lambda}s_{m,\lambda} + \varepsilon \left(\sum\limits_{j=1}^m \left(1 - \frac{t_{m,\lambda}s_{m,\lambda}}{t_js_j}\right)^{\frac{r}{r-1}}\right)^{\frac{r-1}r},
    \end{gather*}
    which proves the upper estimate. 
    
    Now, we turn to the proof of the lower estimate. We let $\varepsilon \le d_{n}$, and, for $j=1,\ldots,n$, set
    \[
        u_j = \left(\frac{\varepsilon \tau_{j,n}}{t_js_j}\right)^{1/p}\left(\sum\limits_{k=1}^n\tau_{k,n}^r\right)^{-\frac{1}{rp}},\qquad v_j = \left(\frac{\varepsilon \tau_{j,n}}{t_js_j}\right)^{1/q}\left(\sum\limits_{k=1}^n\tau_{k,n}^r\right)^{-\frac{1}{rq}},
    \]
    $u_{n+1} := \left(1 - u_1^p-\ldots-u_n^p \right)^{1/p}$, and $v_{n+1} := \left(1 - v_1^q-\ldots-v_n^q\right)^{1/q}$. In addition, we define
    $u^* = \left(u_1,\ldots,u_n,u_{n+1},0,\ldots\right)$ and $v^* := \left(v_1,\ldots,v_n, v_{n+1},0,\ldots\right)$.    By the choice of $\varepsilon$, numbers $u_{n+1}$ and $v_{n+1}$ are well defined and, hence, $(Tu^*,Sv^*)\in W^{T,S}_{p,q}$. Also, 
    \[
        \sum\limits_{j=1}^n \left|Tu^*_j\cdot Sv^*_j\right|^r = \sum\limits_{j=1}^n \left|t_js_ju_jv_j\right|^r = \varepsilon^r,
    \]
    yielding that $\left(\theta,\theta\right) \in J_{\varepsilon,r}^n\left(Tu^*,Sv^*\right)\cap J_{\varepsilon,r}^n(-Tu^*,Sv^*)$. By Corollary~\ref{cor_scalar_product},
    \begin{gather*}
        \displaystyle \mathcal{E}\left(A,W^{T,S}_{p,q}, J_{\varepsilon,r}^n\right) \ge \displaystyle \left|\left(Tu^*, Sv^*\right)\right| \\ 
        = \varepsilon \sum\limits_{j=1}^n \tau_{j,n} \left(\sum\limits_{j=1}^n\tau_{j,n}^r\right)^{-1/r} + t_{n+1}s_{n+1} - \varepsilon \sum\limits_{j=1}^n \frac{t_{n+1}s_{n+1}\tau_{j,n}}{t_js_j} \left(\sum\limits_{j=1}^n\tau_{j,n}^r\right)^{-1/r}\\
        =\displaystyle t_{n+1}s_{n+1} +\varepsilon \left(\sum\limits_{j=1}^n \left(1 - \frac{t_{n+1}s_{n+1}}{t_js_j}\right)^{\frac{r}{r-1}}\right)^{\frac{r-1}r},
    \end{gather*}
    which proves the desired lower estimate. 
    
    Next, let $m\in\{1,\ldots,n-1\}$ be such that $d_{m+1} < \varepsilon\le d_{m}$ and $\lambda=\lambda_\varepsilon\in[0,1)$ be defined by~\eqref{lambda_cond_2}. Set
    \[
        u_j := \left(\frac{\varepsilon\tau_{j}}{t_js_j}\right)^{1/p} \left(\sum\limits_{k=1}^m \tau_k^r\right)^{-\frac 1{rp}}\quad\text{and}\quad v_j := \left(\frac{\varepsilon\tau_{j}}{t_js_j}\right)^{1/q} \left(\sum\limits_{k=1}^m \tau_k^r\right)^{-\frac 1{rq}},
    \]
    and define $u^* := \left(u_1,\ldots,u_m,0,\ldots\right)$ and $v^* := \left(v_1,\ldots,v_m,0,\ldots\right)$. It is not difficult to verify that $\left(\theta,\theta\right) \in J_{\varepsilon,r}^n\left(Tu^*,Sv^*\right)\cap J_{\varepsilon,r}^n(-Tu^*,Sv^*)$. Using Corollary~\ref{cor_scalar_product} we obtain the desired estimate for $\mathcal{E}\left(A,W^{T,S}_{p,q},J^n_{\varepsilon,r}\right)$. 
    
    Finally, let $\varepsilon > t_1s_1$. Consider $u^* = v^* = \left(1,0,\ldots\right)$. Since $d_1 = t_1s_1$, we have $(\theta, \theta)\in J^n_{\varepsilon,r}(Tu^*,Sv^*)\cap J^n_{\varepsilon,r}(-Tu^*,Sv^*)$. Hence, by Corollary~\ref{cor_scalar_product}, $\mathcal{E}\left(A,W^{T,S}_{p,q},J_{\varepsilon,r}^n\right) \ge \left|\left<Tu^*, Sv^*\right>\right| = t_1s_1$.
\end{proof}

\subsection{Applications}

Let $H$ be a complex Hilbert space with orthonormal basis $\left\{\varphi_n\right\}_{n=1}^\infty$, $\{t_k\}_{k=1}^\infty$ be a non-increasing sequence; $T:\ell_2\to\ell_2$ be an operator mapping sequence $x = \left(x_1,x_2,\ldots\right)$ into sequence $Tx = \left(t_1x_1, t_2x_2,\ldots\right)$. Consider the class 
\[
    \mathcal{W}^T := \left\{x = \sum\limits_{n=1}^\infty t_nc_n \varphi_n\,:\,\sum\limits_{n=1}^\infty \left|c_n\right|^2 \le 1\right\},
\]
and information operator $\mathcal{I}_{p,\varepsilon}:H\to \ell_p$, with $2 <p \le \infty$, mapping an element $x = \sum_{n=1}^\infty x_n\varphi_n$ into the set $\mathcal{I}_{p,\varepsilon}x = \left(x_1,x_2,\ldots\right) + B[\varepsilon,\ell_p]\in\ell_p$. Due to isomorphism between $\ell_2$ and $H$, under notations of Section~\ref{sec_3} we have
\begin{equation}
\label{identity}
    \mathcal{E}\left(\mathcal{W}^T,\mathcal{I}_{\varepsilon,p}\right) = \mathcal{E}\left(W^T_2,I^\infty_{\varepsilon,p}\right).
\end{equation}
Moreover, methods of recovery $F_{m,\lambda}^* := \mathfrak{A}\circ\Phi^*_{m,\lambda}$ are optimal, where $\mathfrak{A}:\ell_2\to H$ is the natural isomorphism between $\ell_2$ and $H$: $\mathfrak{A}\left(x_1,x_2,\ldots\right) = \sum_{n=1}^\infty x_n \varphi_n$. Remark that $F_{m,\lambda}$ are triangular methods of recovery that play an important role in the theory of ill-posed problems (see, {\it e.g.}~\cite[Theorem~2.1]{MatPer_02} and references therein).

Consider an important case when $t_n = n^{-\mu}$, $n\in\mathbb{N}$, with some fixed $\mu > 0$. It corresponds {\it e.g.}, to the space $H = L_2(\mathbb{T})$ of square integrable functions defined on a period and the class $\mathcal{W}^T = W^\mu_2(\mathbb{T})$ of functions having $L_2$-bounded Weyl derivative of order $\mu$. Using equality~\eqref{identity} and Theorems~\ref{thm7} and~\ref{thm5}, we obtain
\[
    \lim\limits_{\varepsilon\to 0^+}\varepsilon^{-\lambda}\mathcal{E}\left(\mathcal{W}^T,\mathcal{I}_{\varepsilon,p}\right) = \left(\frac{\alpha+\beta}{\beta}\right)^{1/2} \left(\frac{\beta^{1/2}}{\alpha^{1/p}}\right)^{\lambda},\qquad \lambda = \frac{\mu}{\mu + 1/2 - 1/p},
\]
where
\[
    \alpha = \frac{1}{2\mu}B\left(\frac{2p-2}{p},\frac{1}{2\mu}\right),\qquad \beta = \frac{1}{2\mu}B\left(\frac{2p-2}{p},2+\frac{1}{2\mu}\right),
\]
and $B(\alpha,\beta)$ is the Euler beta function. Indeed, in case $2 < p < \infty$, by selecting $n = n_\varepsilon\in \mathbb{N}$ and $\lambda\in[0,1)$ such that equation~\eqref{lambda_condition1} is satisfied, we can easily verify that
\[
    \lim\limits_{\varepsilon\to 0^+} n_\varepsilon^{\mu/\lambda} c_{n_\varepsilon} = \lim\limits_{\varepsilon\to 0^+} n_\varepsilon^{\mu/\lambda} c_{n_\varepsilon+1} = \alpha^{1/p}\beta^{-1/2}\quad\text{and}\quad \lim\limits_{\varepsilon\to 0^+} \varepsilon^{-\mu/\lambda} n_\varepsilon^{-\mu} = \left(\frac{\beta^{1/2}}{\alpha^{1/p}}\right)^{\mu/\lambda}.
\]
Similar arguments are applicable for $p=\infty$, in which case ${1}/{p}$ should be replaced with $0$.

\end{document}